\theoremstyle{plain}
\newtheorem{theorem}[subsection]{Theorem}
\newtheorem{proposition}[subsection]{Proposition}
\newtheorem{lemma}[subsection]{Lemma}
\newtheorem*{theorem*}{Theorem}
\theoremstyle{definition}
\newtheorem{definition}[subsection]{Definition}
\newtheorem{conjecture}[subsection]{Conjecture}
\theoremstyle{remark}
\newtheorem{remark}[subsection]{Remark}
\newtheorem{notation}[subsection]{Notation}
\newtheorem{question}[subsection]{Question}
\newtheorem{convention}[subsection]{Convention}
\newtheorem{claim}[subsubsection]{Claim}
\numberwithin{equation}{subsection}
\DeclareMathOperator{\Aut}{Aut}
\DeclareMathOperator{\Hom}{Hom}
\DeclareMathOperator{\RC}{RatCurves}
\DeclareMathOperator{\Chow}{Chow}
\DeclareMathOperator{\im}{im}
\DeclareMathOperator{\Pic}{Pic}
\DeclareMathOperator{\NE}{NE}
\DeclareMathOperator{\id}{id}
\DeclareMathOperator{\pr}{pr}
\DeclareMathOperator{\ev}{ev}
\newcommand{\cNE}{{\overline{\NE}}}
\newcommand{\red}{_{\mathrm{red}}}
\newcommand{\zred}{_{z,\mathrm{red}}}
\newcommand{\nequiv}{\equiv _\mathrm{num}}
\newcommand{\sF}{\mathscr{F}}
\newcommand{\sG}{\mathscr{G}}
\newcommand{\sL}{\mathscr{L}}
\newcommand{\cO}{\mathcal{O}}
\newcommand{\bC}{\mathbb{C}}
\newcommand{\bF}{\mathbb{F}}
\newcommand{\bP}{\mathbb{P}}
\newcommand{\bQ}{\mathbb{Q}}
\newcommand{\bR}{\mathbb{R}}
\newcommand{\step}[1]{\par\medskip\par\noindent\textit{#1}}
\newcommand{\case}[1]{\par\medskip\par\noindent\textit{#1}}
   \def\MR#1{}
\title{Fano 5-folds with nef tangent bundles}
\author[A. KANEMITSU]{Akihiro KANEMITSU}
\address{Graduate School of Mathematical Sciences\\The University of Tokyo\\3-8-1 Komaba\\Meguro-ku, Tokyo 153-8914, Japan}
\email{kanemitu@ms.u-tokyo.ac.jp}
\subjclass[2010]{Primary: 14J45; Secondary: 14J40, 14M17}
\keywords{Fano manifold, nef tangent bundle, homogeneous manifold}
\begin{document}


\begin{abstract} 
We prove that Fano 5-folds with nef tangent bundles are rational 
homogeneous manifolds.
\end{abstract}

\maketitle

\section*{Introduction}\label{intro}

In 1979, S. Mori proved that projective manifolds with ample tangent bundles are projective spaces \cite{Mor}.
Since then, several authors have considered how to generalize Mori's result to the case where manifolds in question satisfy weaker positivity conditions.
For example, N. Mok proved that all compact K\"ahler manifolds of semipositive holomorphic bisectional curvature 
are \'etale quotients of symmetric spaces \cite{Mok1}.

A natural algebraic counterpart of Mok's result is the following:
\begin{conjecture}[Campana-Peternell Conjecture \cite{CP1}]\label{CP}
A Fano manifold $X$ with nef tangent bundle is a rational homogeneous manifold.
\end{conjecture}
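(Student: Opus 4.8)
The plan is to reduce the conjecture to the case of Picard number one and then to attack that case through the projective geometry of minimal rational curves. For a Fano manifold $X$ with $T_X$ nef (a \emph{CP manifold}) I would first show that every elementary contraction $\varphi\colon X\to Y$ is a smooth morphism whose target $Y$ is again a CP manifold and whose fibres $F$ are CP manifolds: the nefness of $T_X$ restricts to nefness of $T_F$ and, via the relative tangent sequence, controls the relative anticanonical class, so one expects $\varphi$ to be equidimensional and smooth. Iterating over a chain of extremal rays then exhibits $X$ as a tower of smooth fibrations whose fibres and ultimate base have Picard number one. Proceeding by induction on $\dim X$, I would then show that a CP manifold all of whose elementary contractions have rational homogeneous fibres and base is itself rational homogeneous, i.e.\ that the class of CP manifolds is closed under these fibred constructions. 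This reduces the whole conjecture to the case $\rho(X)=1$.

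Suppose now $\rho(X)=1$. Fix a dominating family of minimal rational curves and, at a general point $x$, form the variety of minimal rational tangents $\mathcal C_x\subset\bP(T_xX)$. The point is that nefness of $T_X$ should force strong positivity: the minimal family is unsplit, its general member is standard, and $\mathcal C_x$ is smooth. I would then compute the projective invariants of $\mathcal C_x$ — its dimension, its secant and tangent defects, and the prolongation of its symbol algebra — and match them against those of the variety of minimal rational tangents of a rational homogeneous manifold $G/P$ of Picard number one.

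Once $\mathcal C_x\subset\bP(T_xX)$ is identified, as a projective subvariety, with the variety of minimal rational tangents of a model $G/P$, I would invoke the recognition principles for such structures: Mok's theorem for irreducible Hermitian symmetric models, and the Cartan--Fubini type extension of Hwang--Mok (with the refinements of Hong--Hwang) to integrate the fibrewise projective isomorphism into a global biholomorphism $X\cong G/P$. This is the same philosophy by which Mori's characterization of projective space is upgraded to the homogeneous setting.

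The crux, and the reason the conjecture is open in general, is the middle step: I do not know a priori that nefness of $T_X$ pins down $\mathcal C_x$ to be \emph{exactly} the variety of minimal rational tangents of a homogeneous model. Ruling out ``exotic'' smooth $\mathcal C_x$ — equivalently, controlling the higher prolongations of the symbol algebra and the degeneracy of the secant and tangent varieties uniformly in $\dim X$ — is the hard part, and no such classification is available beyond small dimensions. In low dimensions $\mathcal C_x$ lives in a projective space of small dimension and can be classified by hand, which is precisely what makes those cases tractable while the general statement remains a conjecture.
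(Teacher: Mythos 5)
The statement you were asked to prove is labelled a \emph{conjecture} in the paper, and the paper does not prove it: it only establishes the five-dimensional case (Theorem~\ref{CP5}). Your proposal, by your own admission in the final paragraph, is likewise not a proof --- the identification of the variety of minimal rational tangents $\mathcal{C}_x$ with that of a homogeneous model $G/P$ is exactly the step no one knows how to carry out in general, so the argument has a genuine, acknowledged gap at its core. Your honesty about this is correct and appropriate, but it means the proposal is an outline of a known open strategy (the Hwang--Mok VMRT/Cartan--Fubini recognition philosophy), not a proof. There is also a second, less visible gap in your first reduction: while it is indeed known (Demailly--Peternell--Schneider, Sol\'a Conde--Wi\'sniewski) that elementary contractions of CP manifolds are smooth fibrations with CP fibres and CP targets, the claim that a CP manifold fibred with rational homogeneous fibres and base is itself rational homogeneous is \emph{not} known --- smooth fibrations with homogeneous fibre and base need not have homogeneous total space without further rigidity input, so the reduction to $\rho(X)=1$ is not available either.

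For comparison, the paper's actual contribution attacks only the remaining open case in dimension five and by quite different means: after quoting the known results that reduce to $\rho_X=1$ and pseudoindex $i_X=4$ (the cases $i_X=n+1,\,n,\,3$ and $\rho_X\geq 2$ being classified by Cho--Miyaoka--Shepherd-Barron, Miyaoka, Hwang--Mok, and Watanabe respectively), it studies the evaluation morphism $e\colon U\to X$ of the minimal rational component directly. The key new lemma uses the Oguiso--Viehweg isotriviality theorem for families of minimal surfaces to show $K_U$ cannot be $e$-nef, producing an extremal ray contracted by $e$; a case analysis of the resulting contraction shows $e$ is either a smooth $\bP^2$-fibration or a composite of two smooth $\bP^1$-fibrations, and then the Occhetta--Sol\'a Conde--Watanabe--Wi\'sniewski characterization of complete flag manifolds, together with Betti-number and slope (nefness of $-K_g$) computations, rules out the non-homogeneous possibilities. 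No VMRT recognition theorem is invoked for the new case. If you want to pursue the conjecture, the paper's method --- extracting fibration structures on the universal family and feeding them into the flag-manifold recognition of OSWW --- is the route that has actually produced new cases, precisely because it sidesteps the classification of $\mathcal{C}_x$ that blocks your middle step.
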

Indeed, a compact K\"ahler manifold with nef tangent bundle is decomposed into 
a ``Fano part'' and a ``complex torus part'' after taking an \'etale cover by
the following theorem of 
J.-P. Demailly, T. Peternell and M. Schneider \cite{DPS}:
\begin{theorem*}[{\cite[Theorem 3.14]{DPS}}]
Any compact K\"ahler manifold $X$ with nef tangent bundle admits an \'etale cover $\widetilde{X} \to X$ with following properties:
\begin{enumerate}
\item The Albanese map $\alpha \colon \widetilde{X} \to A(\widetilde{X})$ is a smooth fibration.
\item The fibers of $\alpha$ are Fano manifolds with nef tangent bundles.
\end{enumerate}

\end{theorem*}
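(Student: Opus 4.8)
The plan is to extract geometric structure from the positivity of $T_X$ through its action on holomorphic $1$-forms, and then read the statement off the Albanese map. Throughout, the only input is that $T_X$ nef forces every quotient bundle, and in particular $-K_X=\det T_X$, to be nef. I expect this semipositivity to enter through a curvature (Bochner-type) estimate: choosing Hermitian metrics on $T_X$ whose curvature is bounded below by $-\varepsilon\omega$ for every $\varepsilon>0$, one shows that every nonzero holomorphic $1$-form $\eta\in H^0(X,\Omega^1_X)$ is nowhere vanishing, its pointwise norm being essentially parallel so that it cannot acquire a zero. I regard establishing this zero-free property as the first technical pillar.

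Granting it, the Albanese map becomes a submersion almost for free. A basis $\eta_1,\dots,\eta_q$ of $H^0(X,\Omega^1_X)$ (where $q=\dim A(X)$) must be pointwise linearly independent, since a relation $\sum_i c_i\eta_i(x)=0$ at a point would produce a nonzero $1$-form vanishing at $x$. As every holomorphic $1$-form is pulled back from the Albanese torus, this says exactly that $\alpha^*\Omega^1_{A(X)}\to\Omega^1_X$ is fibrewise injective, i.e.\ that $d\alpha\colon T_X\to\alpha^*T_{A(X)}$ is everywhere surjective. Hence $\alpha$ is a submersion; since $X$ is compact this makes it a proper smooth fibration, whose image is open and closed and therefore all of the connected torus $A(X)$. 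Passing to a finite \'etale cover $\widetilde{X}\to X$, pulled back along a suitable isogeny to trivialize the finite part of the Stein factorization, I arrange that the Albanese map of $\widetilde{X}$ is surjective with connected fibres, which is assertion~(1).

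It remains to identify the fibres. For a fibre $F=\alpha^{-1}(a)$ the relative tangent sequence restricts to $0\to T_F\to T_X|_F\to\cO_F^{\oplus q}\to 0$, because $T_{A(X)}$ is trivial; in particular $-K_F=(-K_X)|_F$ is nef. The tangent bundle $T_F$ is again nef: the relative tangent bundle $T_{X/A(X)}$ is nef, the quotient $\cO^{\oplus q}$ being numerically flat, and $T_F$ is its restriction. Moreover $F$ has vanishing irregularity: via Albanese functoriality and the smoothness of $\alpha$ (a Leray argument comparing $H^0(\Omega^1_X)$ with $H^0(\Omega^1_{A(X)})$), any holomorphic $1$-form on $F$ would enlarge $H^0(X,\Omega^1_X)$ beyond the base forms. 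Thus each fibre is a compact K\"ahler manifold with nef tangent bundle and $q(F)=0$.

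The main obstacle is the final step: upgrading ``$-K_F$ nef with $q(F)=0$'' to ``$F$ Fano'', i.e.\ to the ampleness of $-K_F$. Here I would turn the positivity of $T_F$ into rational curves. Since $F$ has nef tangent bundle and trivial Albanese it admits no nonconstant map to an abelian variety, and I expect this to force rational connectedness: running the maximal rationally connected fibration $F\dashrightarrow Z$ and ruling out $\dim Z>0$, since a positive-dimensional, non-uniruled quotient would inherit enough positivity from $T_F$ to carry a nontrivial map to its Albanese, contradicting $q(F)=0$. Once $F$ is rationally connected it is projective, and the dichotomy for nef tangent bundles---numerically trivial nef subquotients corresponding to flat abelian directions, which are now absent---should pin down $-K_F$ as ample rather than merely nef, giving~(2). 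Isolating this ampleness, i.e.\ excluding numerically trivial directions in $-K_F$, is the genuinely hard point; by contrast the submersivity in~(1) is formal once the zero-free lemma for $1$-forms is in hand.
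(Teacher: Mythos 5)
Note first that the paper itself contains no proof of this statement: it is quoted verbatim from \cite[Theorem~3.14]{DPS}, so your attempt can only be measured against the argument of Demailly--Peternell--Schneider. Your first two pillars do match theirs: the zero-free property of holomorphic $1$-forms (via metrics with curvature $\geq -\varepsilon\omega$) and the resulting pointwise independence of a basis of $H^0(X,\Omega^1_X)$, giving that the Albanese map is a proper surjective submersion, is essentially how the smoothness in (1) is obtained; likewise the nefness of $T_F$ via the sequence $0 \to T_F \to T_X|_F \to \cO_F^{\oplus q} \to 0$ is a correct use of the standard lemma that the kernel of a map from a nef bundle onto a numerically flat bundle is nef (and, incidentally, your Stein-factorization cover is unnecessary: the universal property of the Albanese already forces connected fibers once $\alpha$ is a smooth fibration).

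The genuine gap is your claim that $q(F)=0$, and it is fatal as stated. A holomorphic $1$-form on a fiber does \emph{not} ``enlarge $H^0(X,\Omega^1_X)$'': fiberwise forms assemble into the local system $R^0\alpha_*\Omega^1_{X/A}$, and nontrivial monodromy kills all global sections. A bielliptic (hyperelliptic) surface $X$ is a concrete counterexample to your construction: its tangent bundle is flat, hence nef, its Albanese map is already a smooth fibration with connected fibers, so your procedure terminates with $\widetilde{X}=X$ --- yet the fibers are elliptic curves, not Fano. This is precisely why the \'etale cover in the theorem cannot be chosen merely to fix connectedness: in \cite{DPS} one passes to a cover realizing the \emph{maximal} irregularity $\widetilde{q}(X)=\max\{\,q(X')\mid X'\to X \text{ finite \'etale}\,\}$ (after proving this maximum is finite and attained), and then shows the fibers satisfy $\widetilde{q}(F)=0$, i.e.\ no finite \'etale cover of $F$ carries a $1$-form. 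Your final step is also broken independently of this: in the MRC argument, a positive-dimensional non-uniruled quotient $Z$ need not map nontrivially to an abelian variety (K3 or Calabi--Yau type targets have $q=0$), so ``$q(F)=0$ forces rational connectedness'' does not follow as you argue. The actual route to (2) in \cite{DPS} is numerical: for nef $T_F$, if $c_1(F)^{\dim F}=0$ then all top-degree Chern numbers vanish, whence $\chi(\cO_F)=0$ and some $h^{0,i}(F)\neq 0$ with $i\geq 1$, which covering-space arguments convert into a contradiction with $\widetilde{q}(F)=0$; you correctly flag ampleness of $-K_F$ as the hard point, but the mechanism you propose for it would not close.
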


The objective of this paper is to check Conjecture~\ref{CP} in dimension five: 
\begin{theorem}\label{CP5}
Fano $5$-folds with nef tangent bundles are rational homogeneous manifolds.
\end{theorem}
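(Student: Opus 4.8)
The plan is to classify the \emph{FT $5$-folds}, i.e.\ the Fano $5$-folds $X$ with $T_X$ nef; by the Demailly--Peternell--Schneider theorem quoted above it suffices to treat the Fano case. Two structural facts drive the argument. First, every FT manifold is rationally connected and carries an unsplit dominating family of minimal rational curves. Second, every elementary contraction of an extremal ray of an FT manifold is a \emph{smooth} fibre-type morphism whose fibres and base are again FT manifolds (Demailly--Peternell--Schneider, Sol\'a Conde--Wi\'sniewski). I would run an induction on $\dim X$, using the known classification of FT manifolds of dimension $\le 4$ as the base case, and split into Picard number $\rho(X)\ge 2$ and $\rho(X)=1$.

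For $\rho(X)\ge 2$ I would choose an elementary contraction $\varphi\colon X\to Y$. Smoothness forces $\dim Y<5$ and a general fibre $F$ to have $1\le\dim F\le 4$, both being FT and hence rational homogeneous by induction. The remaining problem is to show that the smooth fibration $\varphi$ has no nontrivial moduli, so that $X$ itself is a homogeneous $G/P$. I would organize the finitely many elementary contractions of $X$ into their combinatorial ``Dynkin-type'' datum and analyse the relative families of minimal rational curves connecting the contractions; matching each admissible configuration in dimension five to a marked Dynkin diagram then identifies $X$ with the corresponding flag-type manifold, $\bP$-bundle or quadric bundle over a lower-dimensional homogeneous space. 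This bookkeeping, while lengthy, is governed entirely by the low-dimensional inductive data.

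The essential case is $\rho(X)=1$, where contractions carry no information and one must argue intrinsically via the variety of minimal rational tangents (VMRT). Fixing a minimal family, let $\mathcal C_x\subset\bP(T_{X,x})$ be the VMRT at a general point $x$; nefness of $T_X$ forces strong positivity of $\mathcal C_x$ and pins down the pseudoindex of $X$. The classification then amounts to showing that $\mathcal C_x$ is smooth and irreducible and coincides with the VMRT of one of the three homogeneous $5$-folds of Picard number one, namely $\bP^5$, the quadric $Q^5$, and the $5$-dimensional $G_2$-adjoint (contact) manifold, after which the identification of $X$ follows from recognition theorems: the Cho--Miyaoka--Shepherd-Barron criterion for $\mathcal C_x=\bP(T_{X,x})$ yields $X\cong\bP^5$, the quadric characterization yields $X\cong Q^5$, and the Hwang--Mok recognition of adjoint varieties handles the $G_2$ case. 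I expect this last case --- excluding all degenerate or non-homogeneous VMRTs (in particular ruling out an intermediate index-$4$ del Pezzo structure) and then verifying the hypotheses of the appropriate recognition theorem --- to be the main obstacle. Assembling the $\rho(X)=1$ and $\rho(X)\ge 2$ analyses proves Theorem~\ref{CP5}.
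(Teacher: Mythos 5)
Your reduction is correct as far as it goes, and it mirrors the paper's opening move: the case $\rho(X)\geq 2$ is known (Watanabe, cited in the paper as Theorem~\ref{not4} --- note, however, that your inductive ``Dynkin-type bookkeeping'' is a placeholder for genuinely hard content, since a smooth fibration with homogeneous fibres over a homogeneous base need not be homogeneous; that is the substance of \cite{W2}, which you should cite rather than re-derive), and for $\rho(X)=1$ the pseudoindex satisfies $3\leq i_X\leq 6$, with $i_X=6,5,3$ handled by Cho--Miyaoka--Shepherd-Barron, Miyaoka, and Hwang--Mok respectively. But your plan breaks down exactly at what you yourself call the main obstacle: the case $i_X=4$. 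There is \emph{no} rational homogeneous $5$-fold of Picard number one and pseudoindex four --- $\bP^5$, $\bQ^5$ and $K(G_2)$ have pseudoindex $6$, $5$ and $3$ --- so in this case the task is not to match the VMRT to a homogeneous model and invoke a recognition theorem, but to prove \emph{nonexistence}, and your proposal contains no mechanism for that. Recognition theorems take a known target as input; with no target, ``excluding all degenerate or non-homogeneous VMRTs'' is precisely the whole problem, and for the $2$-dimensional VMRT that $i_X=4$ produces (as opposed to the $1$-dimensional one in Hwang--Mok's $i_X=3$ analysis) no such exclusion technology was available.

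The paper's entire technical content fills exactly this hole, by a route that never touches the VMRT: it studies the universal family $e\colon U\to X$ of minimal rational curves, which is smooth of relative dimension two, and uses the Oguiso--Viehweg isotriviality theorem \cite{OV} for families of minimal surfaces of non-negative Kodaira dimension to produce a $K_U$-negative extremal ray contracted by $e$ (Lemma~\ref{ray}). Analysing the resulting contraction yields a dichotomy (Theorem~\ref{fib}): $e$ is either a smooth $\bP^2$-fibration or a composite of two smooth $\bP^1$-fibrations. The first case is eliminated via the flag-manifold theorem of \cite{OSWW}: Proposition~\ref{rhm1} shows $X$ would then be $\bP^3$ or $LG(3,6)$, neither a $5$-fold of pseudoindex four. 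The second case is eliminated by computing $-K_f\cdot(\pi\text{-fiber})=-K_g\cdot f(\pi\text{-fiber})=-1$ (Lemma~\ref{neg}) and playing this against a Betti-number parity count (for relative dimension $2$ of the quotient contraction) and a determinant argument forcing $-K_g$ to be nef (Proposition~\ref{nef}) in the remaining cases. None of these steps has a counterpart in your outline, so as written the proposal proves the theorem only for $i_X\in\{3,5,6\}$ and leaves the genuinely new case open.
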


Conjecture \ref{CP} is known to be true when $\dim X$ is at most 
four \cite{CP1,CP2,Mok,Hw} and when $X$ is a 5-fold with Picard number two or more \cite{W2}.
For further results about the Campana-Peternell conjecture, we refer the reader to the survey article \cite{MOSWW}.

For brevity, we call a Fano manifold with nef tangent bundle \emph{a CP manifold}.
In this paper, we study mainly CP manifolds with Picard number one.
Given such a manifold $X$ of dimension $n$, 
the \emph{pseudoindex} $i_X$ is defined as the minimum anticanonical degree of rational curves on $X$ (see Definition~\ref{index} below).
Assume $n\geq2$.
Then it is known that $3 \leq i_X \leq n+1$ (see e.g.\ \cite[Theorem~5.1]{W2}).
Furthermore, by the results of Cho-Miyaoka-Shepherd-Barron and Miyaoka,
the pseudoindex $i_X$ is $n+1$ (resp.\ $n$) 
if and only if $X$ is $\mathbb{P}^n$ (resp.\ $\mathbb{Q}^n$) \cite{CMSB,Mi}.
On the other hand, the smallest pseudoindex case is treated by J.-M. Hwang and N. Mok;
$X$ with pseudoindex three is isomorphic to either $\mathbb{P}^2$, $\mathbb{Q}^3$ or $K(G_2)$, where $K(G_2)$ is the $5$-dimensional contact homogeneous manifold of type $G_2$ \cite{Mok,Hw} (see also \cite{MOS,W1}).
Hence, if $n=5$, $X$ is homogeneous or the pseudoindex $i_{X}$ is four.
In this paper, we study the remaining case $i_X=4$.
Note that there is no rational homogeneous $5$-fold with Picard number one and pseudoindex four \cite[Remark~5.3]{W2}.
Hence, we shall show that there is no CP $5$-fold with Picard number one and pseudoindex four, which will complete the proof of Theorem~\ref{CP5}.

\medskip
We sketch the outline of this paper.

In Section~\ref{pre}, we review basic results  
concerning families of minimal rational curves.
Given a CP manifold $X$ with Picard number one,
an irreducible component $V$ of the scheme parametrizing  rational curves on $X$ is called a \emph{minimal rational component} if it parametrizes rational curves of minimum anticanonical degree. 
Given a minimal rational component $V$,
there exists the following diagram consisting of the two natural projections
\[
\begin{CD}
     U        @> e >>   X     \\
@V \pi VV                        \\
     V,
\end{CD}
\]
where $\pi \colon  U \to V$ is the universal family of rational curves and $e\colon  U \to X$ is the evaluation morphism. 
Since the tangent bundle is nef, the evaluation morphism $e$ is a smooth morphism \cite[Corollary~1.3]{KMM1} (see also \cite[II. Theorem~2.15 and Corollary~3.5.3]{K}).

In Sections~\ref{twoP} and \ref{fammrc}, we study the structure of $(U,X,V; e,\pi)$ above 
when $X$ has pseudoindex four.   
In this case, the evaluation morphism $e$ is of relative dimension two. 
By virtue of a result of K.~Oguiso and E.~Viehweg \cite{OV}, We can apply a similar argument as in the proof of \cite[Lemma~1.2.2]{Mok} and find an extremal ray of $\cNE (U/X)$ (see Lemma~\ref{ray}).
Then, by carefully looking at the associated extremal contraction, we prove the following theorem (see Theorem~\ref{fib} for a precise statement).

\begin{theorem}\label{fiber}
Let $X$ be a CP manifold with Picard number one and pseudoindex four.
Then the evaluation morphism of minimal rational curves is one of the following:
\begin{enumerate}
\item\label{fiber1} a smooth $\bP^2$-fibration.
\item a composite of two smooth $\bP^1$-fibrations.
\end{enumerate}
\end{theorem}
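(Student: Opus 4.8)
The plan is to analyze the evaluation morphism $e\colon U \to X$, which we know is smooth of relative dimension two by the discussion preceding the theorem, and determine its fiber structure by studying the geometry of $U$ as a variety equipped with two projections. The key input is Lemma~\ref{ray}, which (following the Oguiso--Viehweg result and the argument in \cite[Lemma~1.2.2]{Mok}) produces an extremal ray of $\cNE(U/X)$. First I would contract this extremal ray relative to $X$, obtaining a contraction morphism $\varphi\colon U \to W$ over $X$, and observe that since the fibers of $e$ are smooth projective surfaces, the relevant Mori theory takes place fiberwise on these surfaces. The strategy is to identify which smooth projective surfaces $F$ can arise as fibers of $e$, subject to the strong constraints coming from the CP hypothesis (the tangent bundle of $X$ is nef, hence so is that of $U$ along fibers) and from the fact that these surfaces are swept out by minimal rational curves.

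The central geometric point is that the universal family $\pi\colon U \to V$ carries a section-like structure: the fiber of $e$ over a point $x \in X$ parametrizes minimal rational curves through $x$ together with a choice of point on each, so it maps to the normalized space of minimal rational curves through $x$, the so-called variety of minimal rational tangents. I would therefore examine the restriction of $\pi$ to a fiber $F = e^{-1}(x)$ and show that $\pi|_F$ realizes $F$ as a curve-bundle over the variety of minimal rational tangents at $x$. Combining this with the extremal contraction of Lemma~\ref{ray}, the fiber $F$ should be forced to be a smooth surface admitting a $\bP^1$-fibration or being $\bP^2$ itself. Concretely, the two cases in the theorem should correspond to: (1) $F \cong \bP^2$, giving a smooth $\bP^2$-fibration; and (2) $F$ a surface that is a $\bP^1$-bundle, in which case the composition $U \to W \to X$ exhibits $e$ as a tower of two smooth $\bP^1$-fibrations.

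The main technical work lies in ruling out all other possibilities for the fiber surface $F$ and in verifying the smoothness and the fibration structure globally rather than just fiberwise. I would argue that the extremal contraction $\varphi\colon U \to W$ over $X$ must be a smooth $\bP^1$-fibration: since $U$ itself has nef tangent bundle properties inherited from the CP condition and the contraction is of fiber type with one-dimensional fibers, the fibers must be smooth conics, hence $\bP^1$'s, and no divisorial or small contractions can occur. Once $\varphi$ is a smooth $\bP^1$-fibration, the induced map $\bar{e}\colon W \to X$ is again smooth, and its fibers are the images of the $F$'s. A dimension count together with the classification of the possible fibers then yields that either $F \cong \bP^2$ (case 1, where $\bar{e}$ is finite and $e$ is directly a $\bP^2$-fibration), or $\bar{e}\colon W \to X$ is itself a smooth $\bP^1$-fibration (case 2).

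I expect the hardest step to be the rigidity argument showing that the relative contraction $\varphi$ is genuinely a smooth $\bP^1$-bundle with no degenerate fibers, and more subtly, proving that the resulting $\bP^2$-fibration in case (1) is \emph{smooth}, i.e.\ Zariski-locally trivial in the sense required, rather than merely a fibration with $\bP^2$ fibers. This smoothness should follow from the nefness of the tangent bundle of $U$ (which prevents jumping behavior in the fibers) combined with a deformation argument for the minimal rational curves, but making this precise while keeping track of the two projections simultaneously is where the bulk of the effort will go.
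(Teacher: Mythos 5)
Your opening move matches the paper's: contract the extremal ray produced by Lemma~\ref{ray} to get $U \xrightarrow{\varphi} W' \to X$ and split into cases. But the pivotal assertion that $\varphi$ ``must be a smooth $\bP^1$-fibration'' because $U$ has ``nef tangent bundle properties inherited from the CP condition'' and ``no divisorial or small contractions can occur'' is a genuine gap, and on two counts. First, $T_U$ is not nef in general (if it were, $U$ would itself be a CP manifold and the whole problem would trivialize), so no such inheritance is available. Second, a divisorial contraction cannot be ruled out a priori: since $\varphi$-fibers are at most one-dimensional, Ando's classification allows $\varphi$ to be the blow-up of a smooth codimension-two subvariety $Z \subset W'$, with $e$-fibers isomorphic to $\bF_1$. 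The paper does not exclude this case --- it devotes all of Subcase 2.1 to it, showing $Z \simeq X$ via Lemma~\ref{-1}~(\ref{-13}) (a Deligne invariant-cycle argument proving distinct $(-1)$-curves in an $e$-fiber are numerically independent in $N_1(U)$), using the classification of \cite{W1} to force $\pi|_E$ surjective, and then applying the Oguiso--Viehweg theorem a \emph{second} time to show $K_{W'}$ is not $\psi$-nef, whence $\psi$ is a smooth $\bP^2$-fibration and $e$ refactors as a composite of two smooth $\bP^1$-fibrations after all. Even in the fiber-type case, your proposal has no mechanism for excluding degenerate conics: the paper kills reducible fibers again by Lemma~\ref{-1}~(\ref{-13}) (the two $(-1)$-components cannot span a single elementary ray) and non-reduced fibers by a normal-bundle computation, not by any nefness of $T_U$.

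The second gap is at the top of the tower: granting that $f=\varphi$ is a smooth $\bP^1$-fibration, the induced $g\colon W \to X$ is smooth of relative dimension one, but nothing in your ``dimension count together with the classification of the possible fibers'' explains why its fibers are \emph{rational}; a priori they could be curves of positive genus. The paper rules this out with yet another isotriviality argument: positive genus would make $g$ isotrivial over every rational curve on $X$ (Theorem~\ref{isotriv}), forcing the rationally connected quotient $Y$ --- whose very existence as a morphism requires Theorem~\ref{ext}, a nontrivial result about manifolds with two smooth $\bP^1$-fibrations that your sketch never invokes --- to be a curve; then a $q$-fiber, rational homogeneous by Remark~\ref{remquot}, maps finitely onto $X$, and \cite{HM} or \cite{L} together with $i_X=4$ gives $X \simeq \bP^3$, a contradiction. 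Finally, the step you flag as hardest (smoothness of the $\bP^2$-fibration in case (1)) is actually immediate in the paper: $e$ is already smooth by Proposition~\ref{fam}~(\ref{fam2}) (nefness of $T_X$), and when $e$ is elementary its smooth Fano surface fibers have $\rho(F)=1$ by Proposition~\ref{rho}, hence are $\bP^2$; no VMRT or deformation argument is needed there.
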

\noindent
This theorem asserts that the evaluation morphism associated with a CP manifold $X$ looks like 
the one associated with a rational homogeneous manifold.

Finally, in Section~\ref{pf_CP5}, we complete the proof of Theorem~\ref{CP5}.
Recently, the following strategy to study the Campana-Peternell conjecture was proposed in \cite{MOSW} (see also \cite[Section~6]{MOSWW}):
\begin{enumerate}
\item Prove that the CP manifolds with ``maximal'' Picard number are complete flag manifolds;
\item Prove that any CP manifold is dominated by a CP manifold with ``maximal'' Picard number.
\end{enumerate}
The step (1) was proved by G.~Occhetta, L. E. Sol\'a Conde, K.~Watanabe, and J.~A. Wi{\'s}niewski \cite{OSWW}.
Following the above strategy,
we can also show that Conjecture~\ref{CP} is true for CP manifolds with Picard number one and pseudoindex four for which the evaluation morphism $e \colon U \to X$ is a smooth $\bP ^2$-fibration
(see Proposition~\ref{rhm1}).

\medskip

In the forthcoming paper, we generalize the result of this paper and prove that CP $n$-folds with Picard number $\rho \geq n-4$ are rational homogeneous manifolds.
The same result in the case where $\rho \geq n-3$ is independently obtained by K. Watanabe.

\begin{convention}
In this paper, we work over the field of complex numbers.
A morphism $f\colon X \to Y$ is called a \textit{$\bP ^r$-bundle} if it is isomorphic to the projectivization of a vector bundle of rank $r+1$.
On the other hand, a morphism $f\colon X \to Y$ is called a \textit{smooth $\bP ^r$-fibration} if it is smooth and every fiber is isomorphic to $\bP ^r$.
We will denote by
$\cO (a_1^{n_1}, \dots , a_k^{n_k})$
the vector bundle
$\cO (a_1)^{\oplus n_1} \oplus \dots \oplus \cO (a_k)^{\oplus n_k}$ 
on $\bP ^1$.
\end{convention}

\subsection*{Acknowledgement}
The author wishes to express his gratitude to Professor Yoichi Miyaoka, his supervisor, for his encouragement, comments and suggestions.
He is also grateful to Professor Hiromichi Takagi and Professor Kiwamu Watanabe for their helpful comments and suggestions.
This work was supported by the Program for Leading Graduate Schools, MEXT, Japan.

\section{Preliminaries}\label{pre}

In this section, we review some results concerning rational curves on Fano manifolds with nef tangent bundles.
Our basic references are \cite{K} and \cite{MOSWW}.

First, for brevity, we define:
\begin{definition}[{\cite[Definition 1.4]{MOSWW}}]
A manifold $X$ is said to be a \textit{CP manifold} if $X$ is a Fano manifold with nef tangent bundle.
\end{definition}

We briefly recall the known results about the Campana-Peternell conjecture in dimension five.
\begin{definition}\label{index}
Let $X$ be a Fano manifold with Picard number one.
We define the pseudoindex $i_X$ as follows:
\begin{align*}
i_X &\coloneqq \min \{\,  -K_X.C \mid \text{$C$ is a rational curve on $X$} \,\}.
\end{align*}
\end{definition}

It is known that for a CP $n$-fold ($n\geq2$) with Picard number one the pseudoindex of $X$ satisfies $3 \leq i_{X} \leq n+1$ (see e.g.\ \cite[Theorem~5.1]{W2}).
Furthermore the following holds:
\begin{enumerate}
 \item If $i_{X}=n+1$, then $X \simeq \bP ^{n}$ \cite{CMSB}.
 \item If $i_{X}=n$, then $X \simeq \bQ ^{n}$ \cite{Mi}.  
 \item If $i_{X}=3$, then $X \simeq \bP ^{2}$, $\bQ ^{3}$ or $K(G_{2})$, where $K(G_{2})$ is the $5$-dimensional contact homogeneous manifold of type $G_{2}$ \cite[Section~4]{Hw}, \cite{Mok}.
\end{enumerate}

On the other hand, K. Watanabe solved the Campana-Peternell conjecture in dimension five with Picard number greater than one \cite{W2}. Hence we have:

\begin{theorem}[{\cite[Theorem 1.2 and Corollary 5.2]{W2}}]\label{not4}
Let $X$ be a CP 5-fold.
Then one of the following holds:
\begin{enumerate}
\item $X$ is a rational homogeneous manifold.
\item $\rho _X=1$ and $i_X=4$.
\end{enumerate}
\end{theorem}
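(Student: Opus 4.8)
The plan is to split the argument according to whether the Picard number $\rho_X$ is at least two or equal to one.

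Suppose first that $\rho_X \geq 2$; here the goal is to show that $X$ is rational homogeneous, and this is the substantive content (it is the main theorem of \cite{W2}). The natural approach is to pick an elementary Fano--Mori contraction $\varphi \colon X \to Y$. Using the nefness of $T_X$, one first shows that $\varphi$ is a smooth fibration. The relative tangent sequence $0 \to T_{X/Y} \to T_X \to \varphi^* T_Y \to 0$ then exhibits $\varphi^* T_Y$ as a quotient of a nef bundle, hence nef, so the Fano manifold $Y$ is itself a CP manifold; a parallel analysis of the fibers (using that the normal bundle of a fiber is trivial) shows that the general fiber $F$ is a CP manifold as well. Since $\rho_X \geq 2$ and $\varphi$ is nontrivial, both $\dim Y$ and $\dim F$ are at most four, so the Campana--Peternell conjecture in dimensions $\leq 4$ (recalled above) applies and gives that $Y$ and $F$ are rational homogeneous.

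Now suppose $\rho_X = 1$. I would invoke the pseudoindex bounds $3 \leq i_X \leq n+1 = 6$ valid for CP $n$-folds of Picard number one. If $i_X \neq 4$, then $i_X \in \{3,5,6\}$, and each of these values pins down $X$: if $i_X = 6$ then $X \simeq \bP^5$ by \cite{CMSB}; if $i_X = 5$ then $X \simeq \bQ^5$ by \cite{Mi}; and if $i_X = 3$ then, since $\dim X = 5$, $X \simeq K(G_2)$ by \cite{Hw,Mok}. All three are rational homogeneous. Consequently, if $X$ is not rational homogeneous, the only surviving possibility is $\rho_X = 1$ and $i_X = 4$, which is alternative (2). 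Combining the two cases yields the stated dichotomy.

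The hard part is the case $\rho_X \geq 2$. Reducing to lower-dimensional CP manifolds via a single contraction is essentially formal, but promoting the fibered structure to \emph{global} homogeneity of $X$ is delicate: homogeneity of the base $Y$ and of the fibers $F$ does not by itself imply homogeneity of the total space. One must therefore classify all the extremal contractions of such a $5$-fold, exclude the combinations that would produce a non-homogeneous total space, and invoke a recognition result—typically phrased in terms of the variety of minimal rational tangents and a Cartan--Fubini type rigidity—to conclude that $X$ is rational homogeneous. This reassembly step is where the real work of \cite{W2} lies.
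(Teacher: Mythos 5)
Your proposal is correct and follows essentially the same route as the paper: the paper states this result as a direct citation of \cite[Theorem~1.2 and Corollary~5.2]{W2} for the case $\rho_X \geq 2$, combined (as in its introduction) with the pseudoindex bounds $3 \leq i_X \leq 6$ and the classifications of \cite{CMSB}, \cite{Mi} and \cite{Hw,Mok} for $i_X = 6, 5, 3$ respectively in the case $\rho_X = 1$, which is exactly your case division. Your added sketch of the internals of \cite{W2} is reasonable but, like the paper, you are ultimately treating that theorem as a black box, so nothing further is required.
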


\subsection*{Families of Rational Curves on CP manifolds}
Let $X$ be a CP manifold of dimension $n$.
We will denote by
$\Hom (\bP ^1, X)$
the scheme which parametrizes morphisms from $\bP ^1$ to $X$ and by $\ev\colon  \bP ^1 \times \Hom (\bP ^1, X) \to X$ the evaluation morphism  $\ev\colon (p,[f]) \mapsto f(p) \in X$.   
As the normalized quotient of $\Hom (\bP ^1,X)$ under the action of $\Aut(\bP ^1)$,
we construct $\RC ^n(X)$, the scheme parametrizing rational curves on $X$
(see \cite[II, Section~2]{K} and \cite[2.3]{MOSWW}).

\begin{definition}\label{minirc}
An irreducible component $V$ of $\RC ^n(X)$ is called a
\textit{minimal rational component}
if $V$ parametrizes rational curves of the minimum anticanonical degree.
\end{definition}

\begin{notation}\label{min}
Given a minimal rational component $V$ of anticanonical degree $d$,
we have the diagram consisting of the two natural projections $\pi$ and $e$
\[
\begin{CD}
     U        @> e >>   X     \\
@V \pi VV                        \\
     V,
\end{CD}
\]
where $\pi \colon  U \to V$ is the universal family and $e\colon  U \to X$ is the evaluation morphism. 
\end{notation}

\begin{proposition}[{\cite[II. Theorem~1.2, Corollary~2.12, Proposition~2.14, Theorem~2.15 and Corollary~3.5.3]{K}, \cite[Proposition~2.10]{MOSWW}}]\label{fam}
In the notation above, 
we have: 
\begin{enumerate}

\item\label{fam1}
$V$ is a smooth projective variety of dimension $n+d-3$.

\item\label{fam2}
$e$ is a smooth morphism with connected fibers of relative dimension $d-2$ and $\pi$ is a smooth $\bP ^1$-fibration.

\end{enumerate}
\end{proposition}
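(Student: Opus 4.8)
The plan is to run the standard deformation theory of rational curves, drawing all the needed positivity from the single hypothesis that $T_X$ is nef. The one observation that drives everything is that for any morphism $f\colon \bP^1 \to X$ the pullback $f^*T_X$ is a nef vector bundle on $\bP^1$, hence splits as $\bigoplus_{i=1}^n \cO(a_i)$ with every $a_i \geq 0$. Two consequences follow at once and are all that the proposition requires: first, $H^1(\bP^1, f^*T_X)=0$, since each summand $\cO(a_i)$ with $a_i\geq 0$ has vanishing higher cohomology, so deformations of $f$ are unobstructed; second, $f^*T_X$ is globally generated, which will force the evaluation morphism to be a submersion. (The finer ``standard'' splitting $\cO(2)\oplus\cO(1)^{\oplus(d-2)}\oplus\cO^{\oplus(n-d+1)}$ is not needed here; only $a_i\geq 0$ is used.)

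First I would analyze $\Hom(\bP^1,X)$. Its Zariski tangent space at $[f]$ is $H^0(\bP^1,f^*T_X)$ and its obstructions lie in $H^1(\bP^1,f^*T_X)=0$, so $\Hom(\bP^1,X)$ is smooth at every point, of dimension $h^0(f^*T_X)=\chi(f^*T_X)=\deg f^*T_X + n = -K_X.C + n$ by Riemann--Roch. For a general member of the minimal component, which is birational onto a curve $C$ of anticanonical degree $d$, this equals $n+d$. The variety $V$ is the normalized quotient of the birational locus of $\Hom(\bP^1,X)$ by the action of $\Aut(\bP^1)\simeq\mathrm{PGL}_2$, which is free there (if $f$ is birational onto its image, then $f\circ g=f$ forces $g=\id$); a free quotient of a smooth variety by a $3$-dimensional smooth group is smooth, so $V$ is smooth of dimension $(n+d)-3=n+d-3$, and it is projective as a component of $\RC^n(X)$. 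This proves part (1).

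For $\pi$, over the birational locus the universal family is the trivial bundle $\bP^1\times(\text{that locus})$; it descends along the free $\mathrm{PGL}_2$-action to an \'etale-locally trivial $\bP^1$-fibration $U\to V$, which is smooth since $V$ is. For $e$, I would compute the differential at a point $u\in U$ lying over $[f]$ and $p\in\bP^1$: in the presentation $\bP^1\times\Hom(\bP^1,X)$ the differential of evaluation is a map $T_p\bP^1\oplus H^0(\bP^1,f^*T_X)\to T_{X,f(p)}$ whose restriction to the second summand is $s\mapsto s(p)$. Global generation of $f^*T_X$ makes this evaluation surjective, so $de$ is surjective at every point and $e$ is a smooth morphism (this is the content of \cite[Corollary~1.3]{KMM1}); its relative dimension is $\dim U-\dim X=(n+d-2)-n=d-2$.

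It remains to show that the fibers of $e$ are connected. Since $V$ is projective, $U$ is projective and $e$ is proper; being smooth and proper with $X$ connected, $e$ is surjective (its image is open and closed). Consider the Stein factorization $U\xrightarrow{h}W\xrightarrow{g}X$: because $e$ is proper, flat and has geometrically reduced (indeed smooth) fibers, $g$ is finite \'etale, while $W$ is connected because $U$ is irreducible. As $X$ is Fano, it is rationally connected and hence simply connected, so the connected \'etale cover $g$ is an isomorphism; therefore $e=h$ has connected fibers. The steps requiring the most care --- and where the real work of the cited results sits --- are verifying that a general member of the minimal component is birational onto its image (so that the $\mathrm{PGL}_2$-quotient is free and $-K_X.C=d$), and pinning down the infinitesimal identifications so that global generation yields genuine smoothness of $e$ rather than mere dominance; in the write-up I would invoke \cite[II]{K} for these and limit myself to checking their hypotheses through the splitting $f^*T_X=\bigoplus\cO(a_i)$ with $a_i\geq 0$.
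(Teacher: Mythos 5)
Your proposal reconstructs, essentially along the same lines as the sources the paper relies on, the standard deformation-theoretic proof: the paper itself offers no argument beyond citing \cite{K} and \cite{MOSWW} and remarking that connectedness of the $e$-fibers follows from the (algebraic) simple connectedness of $X$. Your main steps --- nefness of $f^*T_X$ giving $H^1(\bP^1,f^*T_X)=0$ and global generation, hence smoothness of $\Hom(\bP^1,X)$ and of the evaluation map, the dimension count $(n+d)-3$, the descent of the universal family to a smooth $\bP^1$-fibration, and the Stein factorization plus simple connectedness of a Fano (rationally connected) manifold for connectedness of the $e$-fibers --- match the cited results (in particular [K, II, Theorem~1.2, Corollary~2.12, Theorem~2.15, Corollary~3.5.3] and [KMM1, Corollary~1.3]) and are correctly executed.

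There is, however, one genuine gap: the projectivity of $V$. You justify it by saying $V$ is ``projective as a component of $\RC^n(X)$,'' but components of $\RC^n(X)$ are in general only \emph{quasi-}projective: for instance, the component parametrizing conics in a projective space is not proper, since its members degenerate to cycles (line pairs, double lines) that are not irreducible rational curves and hence leave the component. Properness of $V$ is exactly the point where minimality must be invoked: because $d$ is the \emph{minimum} anticanonical degree of rational curves on $X$, the family is unsplit --- a degenerating cycle would contain a rational component of $-K_X$-degree strictly less than $d$, by bend-and-break --- and an unsplit component of $\RC^n(X)$ is proper; this is the content of [K, II, Proposition~2.14], one of the results cited in the statement, which your write-up never uses. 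The omission is not cosmetic in your own argument: you later rely on projectivity of $V$ to conclude that $U$ is projective and $e$ is proper, which is what makes the Stein factorization step for connectedness legitimate. Everything else stands once this properness argument is inserted.
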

\begin{remark}
The connectedness of the fibers of the evaluation morphism $e$ follows from the (algebraically) simply connectedness of $X$.
\end{remark}

\section{Manifolds which admit two smooth $\bP ^1$-fibrations}\label{twoP}

In this section, we study projective manifolds which admit two smooth $\bP ^1$-fibrations, based on results  \cite[Theorem~6.5]{MOS}, \cite[Theorem~5]{MOSW} and \cite{W1}, where such manifolds with Picard number two are classified.
The result of this section will be repeatedly used later.

Let $U$ be a smooth projective variety which admits 
two fibrations over smooth projective varieties $W$ and $S$:
\[
\begin{CD}
      U         @> e >>      W     \\
@V \pi VV                           \\
      S,
\end{CD}
\]
where $\pi$ is a smooth $\bP ^1$-fibration and $e$ does not contract any $\pi$-fiber.
In particular, the images of $\pi$-fibers generate a half-line $R_S \subset \cNE (W)$.
It is natural to ask:

\begin{question}
Is $R_S$ an extremal ray of $W$ ?
\end{question}
For a more general question, we refer the reader to \cite{BCD}.
Here, we give a partial answer to this question:

\begin{theorem}\label{ext}
Assume moreover that the morphism $e$ is a smooth $\bP ^1$-fibration.
Then $R_S$ is an extremal ray of $W$.
Furthermore, the contraction of $R_S$ is a smooth morphism.
\end{theorem}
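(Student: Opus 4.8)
The plan is to prove both assertions at once by exhibiting $R_S$ as the ray contracted by an honest smooth morphism $\varphi\colon W\to T$. Fix a $\pi$-fiber $F\cong\bP^1$ and set $C\coloneqq e(F)$, so that $R_S=\bR_{\geq0}[C]$. As $s$ ranges over $S$ the curves $C_s\coloneqq e(F_s)$ form a family of rational curves on $W$ whose total space is $U$: the universal family is the smooth $\bP^1$-fibration $\pi\colon U\to S$ and the evaluation is the \emph{smooth} $\bP^1$-fibration $e\colon U\to W$. My strategy is to recognize $\varphi$ as the rationally connected quotient of $W$ attached to this family and to use the smoothness of both projections to control it globally.

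First I would record the local geometry. Since $F=\pi^{-1}(s)$ and $\pi$ is a smooth $\bP^1$-fibration, the normal bundle is trivial, $N_{F/U}\cong\cO_{\bP^1}^{\oplus\dim S}$. Feeding this into the relative tangent sequence of the smooth morphism $e$, restricted to $F$, I would show that $e|_F\colon F\to C$ has degree one and that a general $C$ is a smooth free rational curve $\cong\bP^1$ whose deformations cover $W$ (as $e$ is surjective). The quantitative point to extract is that the curves $C_s$ through a fixed $w\in W$ are parametrized exactly by the $e$-fiber $e^{-1}(w)\cong\bP^1$, equivalently by its $\pi$-image in $S$; symmetrically, $\pi$ contracts no $e$-fiber, so the whole configuration is symmetric in $\pi$ and $e$.

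Next I would construct $\varphi$. On a suitable open locus the $\{C_s\}$-rationally connected quotient exists as an almost holomorphic map by the theory of Campana and Kollár–Miyaoka–Mori; the content is to prove it is everywhere defined, proper, and \emph{smooth}. Here I would use the smoothness of $e$ together with the surface-by-surface analysis underlying the Picard-number-two classifications of \cite[Theorem~6.5]{MOS}, \cite[Theorem~5]{MOSW} and \cite{W1}: restricting $e$ over a curve $C_s$ yields a smooth ruled surface $\Sigma_s=e^{-1}(C_s)$ on which $\pi$ induces a second $\bP^1$-fibration, and comparing the two rulings (using $N_{F/U}$ trivial, hence $F^2\leq 0$ on $\Sigma_s$) forces $\Sigma_s\cong\bP^1\times\bP^1$ and pins down the numerical type of $C_s$, ruling out degenerations. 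Running this over all slices, I would assemble the commutative diagram
\[
\begin{CD}
U @>e>> W\\
@V\pi VV @VV\varphi V\\
S @>\psi>> T,
\end{CD}
\]
with $\varphi\circ e=\psi\circ\pi$, so that $\varphi$ contracts every $C_s$, and with both $\varphi$ and $\psi$ smooth fibrations. Extremality then follows: every curve contracted by $\varphi$ lies in a fiber, the fiberwise geometry forces $\rho(W/T)=1$, and hence $\cNE(W/T)=R_S$ is a genuine extremal ray of $W$ whose contraction is the smooth morphism $\varphi$.

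I expect the main obstacle to be precisely the middle step: upgrading the rationally connected quotient from an almost holomorphic map on an open set to an everywhere-defined, proper, smooth morphism. This amounts to excluding jumping or degeneration of the family $\{C_s\}$ and to controlling the fibers of $\varphi$ uniformly, and it is exactly here that the hypothesis that \emph{both} $\pi$ and $e$ are smooth $\bP^1$-fibrations—rather than $e$ merely not contracting $\pi$-fibers—is indispensable, entering through the reduction to the classified Picard-number-two situation on the surface slices $\Sigma_s$.
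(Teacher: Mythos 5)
There is a genuine gap, and it sits exactly where you predicted it would: the ``middle step'' of your argument rests on a false claim. You assert that restricting $e$ over a curve $C_s=e(F_s)$ yields a surface $\Sigma_s=e^{-1}(C_s)$ on which $\pi$ induces a second $\bP^1$-fibration, and that comparing the two rulings forces $\Sigma_s\cong\bP^1\times\bP^1$. Already the simplest example satisfying all hypotheses of the theorem refutes this: take $U=\{(p,\ell):p\in\ell\}\subset\bP^2\times\check{\bP}^2$ the flag threefold, $\pi\colon U\to S=\bP^2$ and $e\colon U\to W=\check{\bP}^2$ the two projections, both smooth $\bP^1$-fibrations. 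For $F_p=\pi^{-1}(p)$ one has $C_p=e(F_p)=\{\ell:p\in\ell\}$, a line in $\check{\bP}^2$, and $\Sigma_p=e^{-1}(C_p)=\{(q,\ell):q\in\ell\ni p\}$ is the blow-up of $\bP^2$ at $p$, i.e.\ $\bF_1$, with $\pi|_{\Sigma_p}$ the blow-down --- generically finite, not a ruling. In general $\pi(\Sigma_s)$ may be two-dimensional, so there is no second fibration on the slice, and $F^2\le 0$ on $\Sigma_s$ (here $F^2=-1$) in no way pins down $\Sigma_s$. Consequently your mechanism for ``ruling out degenerations'' collapses, and with it the claimed smoothness of $\varphi$, which you assert rather than prove.

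The deeper problem is that even a corrected slice-by-slice analysis over single curves $C_s$ cannot do the job, because the $S$-equivalence classes are built from \emph{chains} of such curves, and the real content of the theorem is that the dimension of these classes does not jump at special points of $W$. The paper handles precisely this: it introduces the sets $V_m(x)$ of points reachable from $x$ by $S$-chains of length $m$, proves a combinatorial dimension lemma (each step increases $\dim V_m(x)$ by $2$, or by $1$ exactly once just before stabilization --- the dichotomy being whether the restrictions of $\pi$ and $e$ to the chain loci are of fiber type, which is exactly the dichotomy your $\bF_1$ example illustrates), and then combines the relative version over $W\times W$ with semicontinuity to get $\dim V(y)$ constant on all of $W$. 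Only then does the Chow-variety machinery \cite[I.\ Theorem~3.17 and Theorem~3.21]{K} produce the quotient as an honest morphism, giving extremality of $R_S$ (cf.\ \cite[Proposition~1]{BCD}). Smoothness of the contraction is then a separate argument in the style of \cite[Lemmas~4.12 and 4.13]{SW}: one shows every fiber with its reduced structure is a Fano manifold with trivial normal bundle, using \cite[Proposition~1.2]{AW} and the classification of the fibers of $f\circ e$ as complete flag manifolds of Picard number two via \cite{MOSW} or \cite{OSWW} --- note that the classification enters at the level of fibers of the composite contraction, not of the surfaces $\Sigma_s$ inside $U$ where you tried to use it. Your outline (rc-quotient, then extremality from $\rho(W/T)=1$) is the right skeleton and matches the paper's, but both load-bearing steps --- no jumping of the equivalence classes, and smoothness of the contraction --- are missing or based on the false $\bP^1\times\bP^1$ claim.
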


\begin{remark}\label{remquot}
As a consequence, there exists the following diagram
\[
\begin{CD}
U @>e >> W\\
@V\pi VV @VVfV \\
S @>g >> Z,
\end{CD}
\]
where $f$ is the contraction of the ray $R_S$.
Then every fiber of $f \circ e = g \circ \pi$ is a Fano manifold with Picard number two which admits two smooth $\bP ^1$-fibrations.
Such varieties are complete flag manifolds by \cite[Theorem 5]{MOSW} or \cite{OSWW}. In particular, all $f$-fibers are the same and isomorphic to $\bP^1$, $\bP ^2$, $\bP ^3$, $\bQ ^3$, $\bQ ^5$ or $K(G_2)$.
Furthermore,
if $-K_e. (\pi \text{-fiber}) = -1$, then $f$-fibers are isomorphic to $\bP ^2$, $\bQ ^3$ or $K(G_2)$.
\end{remark}

\begin{proof}[Proof of Theorem~\ref{ext}]
We prove first that $R_S$ is an extremal ray, by showing that the S-rationally connected quotient $W \dashrightarrow Z$ is a \emph{morphism}.
Then we prove that the contraction of $R_S$ is smooth, following the proof of  \cite[Theorem 4.4]{SW} (see also \cite[Theorem 5.2]{DPS}).
We refer the reader to \cite[Chapter~5]{De} and \cite[Chapter IV]{K} for accounts of rationally connected quotients.
\step{Set up of the notation.}
Let $n$ be the dimension of $W$.
Given a point $x \in W$, we define:
\begin{align*}
V_0(x)&\coloneqq \{ x \}, \\
V_{m}(x)&\coloneqq e(\pi ^{-1} ( \pi (e^{-1}(V_{m-1}(x))))). 
\end{align*}
Namely, $V_m(x)$ is the set of points of $W$ that can be connected to $x$ by a $S$-chain of length $m$. Furthermore, set
$V(x)\coloneqq V_n(x)$,
that is, the set of points of $W$ that can be connected to $x$ by a $S$-chain (see \cite[Step 2 of the Proof of the Theorem]{KMM2} or \cite[First Step of the Proof of Proposition 5.7]{De}).
Set $d(x) \coloneqq  \dim V(x)$.

Similarly, for the morphisms $E$ and $\Pi$ as in the diagram below,
\[
\begin{CD}
      U \times W                     @> E\coloneqq  e \times \id >>     W\times W        \\
@V \Pi \coloneqq  \pi \times \id VV                                                                  \\
      S \times W,
\end{CD}
\]
we define:
\begin{align*}
V_{0} &\coloneqq  \varDelta \subset W \times W \text{ (the diagonal)},\\
V_{m} &\coloneqq  \bigcup _{x\in W}(V_{m}(x) \times \{ x \}) \subset W \times W\\
        &=  E  (\Pi^{-1} (\Pi(E^{-1}(V_{m-1})))),\\
V &\coloneqq V_n.
\end{align*}
Namely, $V_m$ (resp.\ $V$) is the family of $V_m(x)$ (resp.\ $V(x)$) over $W$ via the second projection $\pr_2$.
Let $d_S$ be the dimension of general $S$-equivalence classes, that is, the relative dimension of $\pr_2\colon  V \to W$.
\step{Step 1. A lemma on the sequence $V_0(x)\subset V_1(x) \subset \cdots \subset V_n(x)$.}
\begin{lemma}\label{lemV}
Let the notation be as above. Then:
\begin{enumerate}
\item\label{lemV1}
  $V_{m+1}(x)=V_{m}(x)$ if and only if the restriction of $\pi$ to $e^{-1}(V_m(x))$ is of fiber type.

\item\label{lemV2}
If $V_{m+1}(x) \neq V_{m}(x)$, then \[\dim \pi ^{-1} (\pi ( e^{-1}(V_{m}(x)) )) = \dim V_{m}(x) +2 .\]

\item\label{lemV3}
$\dim V_{m+1}(x) = \dim V_{m}(x)+1$ if and only if $V_{m+1}(x) \neq V_{m}(x)$ and the restriction of $e$ to $\pi ^{-1} ( \pi ( e^{-1}(V_{m}(x)) )) $ is of fiber type. 
Furthermore, if these equivalent conditions hold, $V_{m+2}(x)=V_{m+1}(x)$.

\item\label{lemV4}
If $V_{m+1}(x)\neq V_{m}(x)$, then $\dim V_{m}(x) = \dim V_{m-1}(x)+2$.

\item\label{lemV5}
Let $m(x)$ be the integer which satisfies $V_{m(x)+1}(x)=V_{m(x)}(x)$ and $V_{m(x)}(x)\neq V_{m(x)-1}(x)$.
Then the following hold.
\begin{enumerate}

\item If the restriction of $e$ to $\pi ^{-1} ( \pi ( e^{-1}(V_{m(x)-1}(x)) )) $ is not of fiber type,
then
\begin{align*}
\dim V (x) &= 2m(x) = \dim V_{m(x)}(x)\\
&>2m(x)-2 = \dim V_{m(x)-1}(x)\\
&>\dots \\
&>2=\dim V_1(x)\\
&>0=\dim V_0(x).
\end{align*}

\item If the restriction of $e$ to $\pi ^{-1} ( \pi ( e^{-1}(V_{m(x)-1}(x)) )) $ is of fiber type,
then
\begin{align*}
\dim V (x) &= 2m(x)-1 = \dim V_{m(x)}(x)\\
&>2m(x)-2 = \dim V_{m(x)-1}(x)\\
&>2m(x)-4 = \dim V_{m(x)-2}(x)\\
&>\dots \\
&>2=\dim V_1(x)\\
&>0=\dim V_0(x).
\end{align*}

\end{enumerate}

\end{enumerate}

\end{lemma}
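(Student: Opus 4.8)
The plan is to reduce all five statements to a single elementary \emph{saturation dichotomy} for the two $\bP^1$-fibrations, and then feed it through the recursion $V_{m+1}(x)=e(\pi^{-1}(\pi(e^{-1}(V_m(x)))))$. Throughout I will use that $\pi$ and $e$ are both smooth $\bP^1$-fibrations, so that the preimage of an irreducible subset under either map is again irreducible of dimension one larger. In particular, by induction starting from $V_0(x)=\{x\}$, every $V_m(x)$ is irreducible, and so are the auxiliary sets $A_m:=e^{-1}(V_m(x))$ (with $\dim A_m=\dim V_m(x)+1$) and $B_m:=\pi^{-1}(\pi(A_m))$.

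The engine is the following dichotomy. For an irreducible closed $Y\subseteq W$ put $A=e^{-1}(Y)$, so $\dim A=\dim Y+1$. Since every fibre of $\pi|_A$ lies inside a $\pi$-fibre $\cong\bP^1$, it has dimension at most $1$; hence either (i) $\pi|_A$ is of fibre type, in which case a general fibre is a $1$-dimensional closed subset of an irreducible $\bP^1$, therefore the \emph{whole} $\bP^1$, so $A$ is a union of $\pi$-fibres, i.e.\ $\pi^{-1}(\pi(A))=A$ and $\dim\pi(A)=\dim Y$; or (ii) $\pi|_A$ is generically finite, with $\dim\pi(A)=\dim Y+1$ and $\dim\pi^{-1}(\pi(A))=\dim Y+2$. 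This closedness-plus-irreducibility argument is the only real content, and it is precisely what forces every jump to be $1$ or $2$ and nothing larger. Statements \eqref{lemV1} and \eqref{lemV2} are then immediate: as $e(B_m)\supseteq e(A_m)=V_m(x)$, one has $V_{m+1}(x)=V_m(x)$ iff $B_m=A_m$ iff $A_m$ is $\pi$-saturated iff $\pi|_{A_m}$ is of fibre type, which is \eqref{lemV1}; and in the remaining generically finite case $\dim B_m=\dim V_m(x)+2$, which is \eqref{lemV2}.

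For \eqref{lemV3} I would apply the \emph{same} dichotomy with the roles of $e$ and $\pi$ exchanged, to the $\pi$-saturated irreducible set $B_m$ and the map $e|_{B_m}\colon B_m\to V_{m+1}(x)$. If $e|_{B_m}$ is of fibre type, its fibres fill out the $e$-fibres, so $\dim V_{m+1}(x)=\dim B_m-1=\dim V_m(x)+1$, and moreover $B_m$ is $e$-saturated, i.e.\ $B_m=e^{-1}(V_{m+1}(x))$; being also $\pi$-saturated, $e^{-1}(V_{m+1}(x))$ is a union of $\pi$-fibres, whence $V_{m+2}(x)=V_{m+1}(x)$ by \eqref{lemV1}. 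If instead $e|_{B_m}$ is generically finite, then $\dim V_{m+1}(x)=\dim B_m=\dim V_m(x)+2$. This yields both the equivalence in \eqref{lemV3} and its ``furthermore''. Statement \eqref{lemV4} follows by contraposition: if $V_{m+1}(x)\neq V_m(x)$, then the passage from $V_{m-1}(x)$ to $V_m(x)$ can be neither trivial (a trivial step propagates through the operator, forcing $V_{m+1}(x)=V_m(x)$) nor of the ``$+1$'' type (which by the furthermore of \eqref{lemV3} also forces $V_{m+1}(x)=V_m(x)$); hence by \eqref{lemV3} it is the generically finite ``$+2$'' step, giving $\dim V_m(x)=\dim V_{m-1}(x)+2$.

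Finally, for \eqref{lemV5} I would assemble the chain. By minimality of $m(x)$ the inclusions are strict, $V_0(x)\subsetneq\cdots\subsetneq V_{m(x)}(x)$, and once two consecutive terms agree the sequence is constant, so $V(x)=V_n(x)=V_{m(x)}(x)$. For each $k$ with $1\le k\le m(x)-1$ we have $V_{k+1}(x)\neq V_k(x)$, so \eqref{lemV4} gives $\dim V_k(x)=\dim V_{k-1}(x)+2$; with $\dim V_0(x)=0$ this yields $\dim V_k(x)=2k$ throughout that range. The final step, from $V_{m(x)-1}(x)$ to $V_{m(x)}(x)$, is governed by \eqref{lemV3} applied to $B_{m(x)-1}$: if $e|_{B_{m(x)-1}}$ is not of fibre type the last jump is again $2$ and $\dim V(x)=2m(x)$, which is case (a); if it is of fibre type the last jump is $1$ and $\dim V(x)=2m(x)-1$, which is case (b). I expect the only delicate point to be the ``furthermore'' of \eqref{lemV3}: one must show that a fibre-type $e|_{B_m}$ forces $B_m$ to be genuinely $e$-saturated rather than merely to contain a single full $e$-fibre, and this is exactly where the irreducibility of $B_m$ together with the fact that a one-dimensional closed subset of $\bP^1$ is all of $\bP^1$ are both indispensable. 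Everything else is linear dimension bookkeeping driven by the dichotomy.
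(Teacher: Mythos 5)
Your proposal is correct and follows essentially the same route as the paper's proof: the paper's argument for items (1)--(4) is precisely your saturation dichotomy (fiber type forces $\pi^{-1}(\pi(e^{-1}(V_m(x))))=e^{-1}(V_m(x))$, while otherwise this preimage has dimension $\dim V_m(x)+2$), followed by the same dimension bookkeeping, with (5) assembled from (3) and (4) exactly as you do, your only addition being to make explicit the irreducibility of $V_m(x)$, $e^{-1}(V_m(x))$ and $B_m$, which the paper leaves implicit. One small caution: in your fiber-type case of (3) the step $\dim B_m-1=\dim V_m(x)+1$ silently invokes (2), hence the hypothesis $V_{m+1}(x)\neq V_m(x)$ --- needed because $e|_{B_m}$ is also of fiber type in the degenerate case $B_m=e^{-1}(V_m(x))$, where the jump is $0$ --- but since the statement of (3) supplies this hypothesis, your argument is complete.
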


\begin{proof}[Proof of Lemma~\ref{lemV}]
(\ref{lemV1})
Assume that $\pi|_{e^{-1}(V_m(x))}$ is of fiber type.
Then, since $\pi$ is of relative dimension $1$,
\[
\pi ^{-1} (\pi ( e^{-1}(V_{m}(x)) )) = e^{-1}(V_{m}(x)).
\]
Hence
\[
V_{m+1}(x) = e(\pi ^{-1} (\pi ( e^{-1}(V_{m}(x)) ))) = V_{m}(x).
\]

On the other hand, if $\pi|_{e^{-1}(V_m(x))}$ is not of fiber type, then we have
\[
\dim \pi ^{-1} (\pi ( e^{-1}(V_{m}(x)) )) = \dim V_{m}(x) +2 .
\]
Hence
\[
\dim V_{m+1}(x) = \dim e(\pi ^{-1} (\pi ( e^{-1}(V_{m}(x)) ))) \geq \dim V_{m}(x)+1.
\]

(\ref{lemV2})
This is clear from (\ref{lemV1}).

(\ref{lemV3})
If $\dim V_{m+1}(x) = \dim V_{m}(x)+1$,
then by (\ref{lemV2}) we have
\[
\dim \pi ^{-1} (\pi ( e^{-1}(V_{m}(x)) )) = \dim V_{m}(x) +2.
\]
Hence
\[
\dim \pi ^{-1} (\pi ( e^{-1}(V_{m}(x)) )) > \dim V_{m+1} (x),
\]
that is,
$e|_{\pi ^{-1} ( \pi ( e^{-1}(V_{m}(x)) ))} $ is of fiber type.

On the other hand, assume that $V_{m+1}(x) \neq V_{m}(x)$ and the restriction of $e|\pi ^{-1} ( \pi ( e^{-1}(V_{m}(x)) )) $ is of fiber type.
Then, by (\ref{lemV2}),
\[
\dim \pi ^{-1} (\pi ( e^{-1}(V_{m}(x)) )) = \dim V_{m}(x) +2 .
\]
Hence $ \dim V_{m+1}(x) = \dim V_{m}(x)+1 $.
In this case, we have 
\[
e^{-1}(V_{m+1}(x))=\pi ^{-1} (\pi ( e^{-1}(V_{m}(x)) )), 
\]
and hence
$V_{m+2}(x)=V_{m+1}(x) $.

(\ref{lemV4})
Since relative dimensions of $e$ and $\pi$ are $1$,
\[\dim V_{m}(x) \leq \dim V_{m-1}(x)+2.\]

If $\dim V_{m}(x) = \dim V_{m-1}(x)$,
then $V_{m}(x) = V_{m-1}(x)$, and hence we have $V_{m+1}(x) = V_{m}(x)$.

If $\dim V_{m}(x) = \dim V_{m-1}(x) + 1$,
then $e|_{\pi ^{-1} ( \pi ( e^{-1}(V_{m-1}(x)) ))} $ is of fiber type by (\ref{lemV3}).
Hence
$ e^{-1}(V_{m}(x)) = \pi ^{-1} ( \pi ( e^{-1}(V_{m-1}(x)) ))$.
Therefore $V_{m+1}(x) = V_{m}(x)$

(\ref{lemV5})
This is clear from (\ref{lemV3}) and (\ref{lemV4}).

\end{proof}

Fix a general point $x \in W$ and $m_S\coloneqq m(x)$, then $\dim V(x) = d_S$ and the restriction of $\Pi$ to $E^{-1}(V_{m_S})$ is of fiber type. 

\step{Step 2.}
In this step, we prove that $m(y) = m_S$ for every $y \in W$.
Since the restriction of $\Pi$ to $E^{-1}(V_{m_S})$ is of fiber type,
the restriction of $\pi$ to $e^{-1}(V_{m_S}(y))$ is of fiber type for every $y \in W$.
Hence $m(y)\leq m_S$ by Lemma~\ref{lemV}~(\ref{lemV1}).
On the other hand, by semicontinuity,
$\dim V(y) \geq d_S$ for every $y \in Y$.
It follows that $m(y) \geq m_S$ by Lemma~\ref{lemV}~(\ref{lemV5}).
Hence $m(y) = m_S$.

\step{Step 3.}
We prove that $d(y) = d_S$ for every $y \in W$.
First, assume that $d_S$ is even.
Then $\dim V(y) \geq d_S = 2m_S$.
By Step 2, $m(y)=m_S$. Hence $d(y) = d_S$ by Lemma~\ref{lemV}~(\ref{lemV5}).

Next, 
assume that $d_S$ is odd.
By Lemma~\ref{lemV}~(\ref{lemV3}), the restriction of $E$ to $\Pi ^{-1} ( \Pi ( E^{-1}(V_{m_S-1}) )) $ is of fiber type.
Hence the restriction of $e$ to $\pi ^{-1} ( \pi ( e^{-1}(V_{m_S-1})(y) )) $ is of fiber type for every $y \in W$.
This implies that $\dim V_{m_S}(y)=\dim V_{m_S-1}(y)+1$ by Lemma~\ref{lemV}~(\ref{lemV3}).
Hence $d(y) = d_S$ by Lemma~\ref{lemV}~(\ref{lemV5}).

\step{Step 4. The contraction of $R_S$.}
By Step 3, we have a morphism $f \colon W \to \Chow (W)$ such that $\pr_2 \colon  V \to W$ is the pullback of the universal family by $f$ \cite[I. Theorem~3.17 and Theorem~3.21]{K}.
We will denote by $Z$ the normalization of $\im f$. Then we have the $S$-rationally connected quotient morphism $f \colon W \to Z$. Hence $R_S$ is an extremal ray and $f$ is the contraction of $R_S$ (see also \cite[Proposition~1]{BCD}).

\step{Step 5. Smoothness of the contraction.}
By rigidity lemma, we have the following diagram
\[
\begin{CD}
U @>e >> W\\
@V\pi VV @VVfV \\
S @>g >> Z.
\end{CD}
\]
By symmetry, $g$ is the $W$-rationally connected quotient morphism of $S$.

Now, $f$ and $g$ are equidimensional Mori contractions with irreducible fibers. Furthermore, $e^*T_W$ (resp.\ $\pi ^* T_W$) is $\pi$-nef (resp.\ $e$-nef).
A similar argument as in the proof of \cite[Lemma~4.12]{SW} shows that every $f$-fiber with its reduced structure is a Fano manifold with trivial normal bundle (see below). Hence the contraction of $R_S$ is smooth by \cite[Lemma~4.13]{SW}.

\medskip
We give an outline of a proof of the fact that every $f$-fiber with its reduced structure is a Fano manifold with trivial normal bundle.

General fibers of $h\coloneqq  f\circ e$ are complete flag manifolds whose Picard numbers are two by \cite[Theorem 5]{MOSW} or \cite{OSWW}.
We may assume that the relative dimension of $f$ is one or that $f$ is corresponding to the long root i.e.\ $-K_W. (\pi \text{-fiber}) = 3$.
Fix an arbitrary point $z \in Z$ and let $W_z$ (resp.\ $S_z$) be the $f$-fiber (resp.\ $g$-fiber) over $z$.
By the same argument as in the proof of \cite[Lemma~4.12]{SW},
we see that $W\zred $ and $S\zred $ are smooth and the normal bundle $N_{W\zred  /W}$ (resp.\ $N_{S\zred /S}$) is trivial over any $\pi$-fiber (resp.\ $e$-fiber).
Hence we may assume that the relative dimension of $f$ is greater than one.
In this case $W\zred$ is corresponding to the long root and $S\zred$ is the unsplit family of lines on $W\zred$.
Indeed, $c_1(N_{W\zred /W})$ is trivial and, by adjunction, $-K_W|_{W\zred }=-K_{W\zred }$.
Now, $N_{W\zred /W}$ is trivial by \cite[Proposition 1.2]{AW}.
\end{proof}

\section{Families of minimal rational curves}\label{fammrc}

Let $X$ be a CP manifold with Picard number one and pseudoindex four,
and let $(U,X,V; e,\pi)$ be as in Notation~\ref{min}.
In this section, we study the structure of $(U,X,V; e,\pi)$.
First, we prove a proposition and a lemma (Proposition~\ref{rho} and Lemma~\ref{-1}).

\begin{proposition}[{\cite[Remark 3.7]{C}}]\label{rho}
Let $f\colon Z \to S$ be a smooth Mori contraction over a smooth rationally connected variety $S$.
Then $\rho(F) = \rho(Z) - \rho(S)$ for every fiber $F$.
\end{proposition}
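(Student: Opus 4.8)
The plan is to prove the numerical equality by comparing second Betti numbers, reducing it to the two families of identities
\[
\rho(W) = b_2(W) \quad (W = Z, F, S), \qquad b_2(Z) = b_2(F) + b_2(S).
\]
Before anything I would record what the hypotheses buy. Since $f$ is a smooth Mori contraction, $K_{Z/S}|_F = K_F$ while $(f^*K_S)|_F = 0$, so $(-K_Z)|_F = -K_F$; as $-K_Z$ is $f$-ample, each fibre $F$ is a \emph{Fano} manifold. The base $S$ is smooth and rationally connected, hence simply connected by the theorem of Campana and of Koll\'ar--Miyaoka--Mori, and satisfies $H^0(S,\Omega^p_S)=0$ for all $p\geq1$.

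For the equalities $\rho(W)=b_2(W)$ I would argue uniformly: whenever $h^{2,0}(W)=0$ one has $H^2(W,\bC)=H^{1,1}(W)$, so $H^2(W,\bQ)$ consists of $(1,1)$-classes and the Lefschetz $(1,1)$-theorem gives $\rho(W)=b_2(W)$. A Fano manifold has $h^{2,0}(F)=h^2(F,\cO_F)=0$ by Kodaira vanishing (and likewise $b_1(F)=0$), and a smooth rationally connected $S$ has $h^{2,0}(S)=0$. For $Z$ the same conclusion $\rho(Z)=b_2(Z)$ will follow once $h^{2,0}(Z)=0$ is established, which I do together with the Betti computation below.

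The heart of the matter is the Leray spectral sequence of $f$, and the difficulty it conceals is the monodromy of $\pi_1(S)$ on the cohomology of the fibre. Because $f$ is smooth and projective, this spectral sequence degenerates at $E_2$ with $\bQ$-coefficients (Deligne), so
\[
b_2(Z) = \dim H^0(S,R^2f_*\bQ) + \dim H^1(S,R^1f_*\bQ) + \dim H^2(S,\bQ).
\]
The middle term vanishes since $b_1(F)=0$ forces $R^1f_*\bQ=0$, and the first term is the space of $\pi_1(S)$-invariants in $H^2(F,\bQ)$, which is \emph{all} of $H^2(F,\bQ)$ precisely because $S$ is simply connected; this is the single place where rational connectedness of the base is essential. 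Hence $b_2(Z)=b_2(F)+b_2(S)$. Running the same decomposition at the level of Hodge structures, using $h^{2,0}(F)=h^{2,0}(S)=0$ and $b_1(F)=0$, shows that the $(2,0)$-part of $H^2(Z)$ vanishes, i.e.\ $h^{2,0}(Z)=0$, which completes $\rho(Z)=b_2(Z)$.

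Combining the three equalities $\rho(W)=b_2(W)$ with $b_2(Z)=b_2(F)+b_2(S)$ yields $\rho(Z)=\rho(F)+\rho(S)$; since $f$ is smooth with connected base all fibres are diffeomorphic, so the identity holds for \emph{every} fibre, not merely a general one. Conceptually this says that the restriction sequence $0\to N^1(S)_{\bR}\xrightarrow{f^*}N^1(Z)_{\bR}\to N^1(F)_{\bR}\to 0$ is exact, and one could argue this algebraically instead: the vanishing $R^1f_*\cO_Z=R^2f_*\cO_Z=0$ makes the relative Picard scheme $\Pic_{Z/S}$ \'etale over the simply connected $S$, hence a trivial covering, so every class on $F$ lifts to $Z$. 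Either way, the one genuine obstacle is the triviality of the monodromy action of $\pi_1(S)$ on $H^2(F)$, which is supplied exactly by the simple connectedness of the rationally connected base $S$; everything else is formal once the fibres are known to be Fano.
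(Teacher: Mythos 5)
Your proof is correct, but it takes a genuinely different route from the paper's. The paper argues as follows: each fiber $F$ is Fano, hence rationally connected by \cite{KMM3}; by Graber--Harris--Starr \cite{GHS} the total space $Z$ is then rationally connected, so $\Pic (\cdot)\otimes \bQ \simeq H^2(\cdot , \bQ )$ for both $Z$ and $F$; simple connectedness of $S$ kills the monodromy, and Deligne's invariant cycle theorem makes the restriction $H^2(Z,\bQ )\to H^2(F,\bQ )$ surjective, giving $\rho (F)=\dim N_1(F,Z)$; finally the quoted Lemma~3.3 of \cite{C} supplies $\dim N_1(F,Z)=\rho (Z)-\rho (S)$. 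You instead run pure Betti/Hodge bookkeeping: Deligne's $E_2$-degeneration of the Leray spectral sequence, together with $R^1f_*\bQ =0$ and triviality of the monodromy, gives $b_2(Z)=b_2(F)+b_2(S)$, and you identify $\rho =b_2$ for $F$, $S$ and $Z$ via $h^{2,0}=0$. This avoids both \cite{GHS} and Casagrande's lemma and in fact yields slightly more (the exactness of $0\to N^1(S)\to N^1(Z)\to N^1(F)\to 0$), at the price of one extra verification that the paper gets for free from rational connectedness of $Z$: your assertion that the $(2,0)$-part of $H^2(Z)$ vanishes requires knowing the Leray data are compatible with Hodge structures. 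This is standard but stated breezily; the cleanest justification is that $f^*\colon H^2(S,\bQ )\to H^2(Z,\bQ )$ and the restriction to $F$ are morphisms of Hodge structures, and degeneration plus $E_2^{1,1}=0$ identifies $\ker \bigl( H^2(Z,\bQ )\to H^2(F,\bQ )\bigr)$ with $f^*H^2(S,\bQ )$, whence $h^{2,0}(Z)\leq h^{2,0}(S)+h^{2,0}(F)=0$; alternatively you could simply quote \cite{GHS} at this point, as the paper does. One small advantage of the paper's formulation is that the surjectivity $H^2(Z,\bQ )\to H^2(F,\bQ )$ it extracts is exactly what gets reused in Lemma~\ref{-1}; but both arguments pivot on the single essential point you correctly isolate, namely the triviality of the $\pi _1(S)$-action on $H^2(F,\bQ )$ coming from the simply connected rationally connected base.
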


\begin{proof}
Since $F$ is a Fano manifold, $F$ is rationally connected by \cite{KMM3}.
Furthermore, by \cite{GHS}, $Z$ is rationally connected.
Hence
\begin{align*}
&\Pic (Z) \otimes \bQ  \simeq H^2(Z, \bQ ),\\
&\Pic (F) \otimes \bQ  \simeq H^2(F, \bQ ).
\end{align*}
Since $S$ is simply connected, the monodromy action is trivial.
Hence
\[
H^2(Z, \bQ ) \to H^2(F, \bQ )
\]
is surjective by Deligne's invariant cycle theorem.
Therefore,
\[
\rho(F) =\dim N_1(F,Z).
\]
On the other hand, $\dim N_1(F,Z) = \rho(Z) - \rho(S)$ by \cite[Lemma 3.3]{C}.
\end{proof}

\begin{lemma}\label{-1}
Let $X$ be a CP manifold with Picard number one and pseudoindex four, $e\colon  U \to X$ the evaluation morphism of minimal rational curves and $F$ an arbitrary $e$-fiber.
Then the following hold:

\begin{enumerate}

\item\label{-11} $H_2(F, \bQ ) \to H_2(U, \bQ )$
is injective.

\item\label{-12} $H^{1,1}(U, \bQ ) \to H^{1,1}(F, \bQ )$ is surjective.

\item\label{-13} For distinct $(-1)$-curves $C_i \subset F$ $(i=1,2)$,
$[C_1] \not \nequiv [C_2]$ in $N_1(U)$.

\end{enumerate}

\end{lemma}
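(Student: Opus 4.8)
The engine behind all three assertions is Deligne's global invariant cycle theorem, applied to the smooth projective morphism $e\colon U \to X$ of Proposition~\ref{fam}. Since $X$ is Fano it is simply connected, so the local system $R^2e_*\bQ$ has trivial monodromy and the restriction $i^*\colon H^2(U,\bQ) \to H^2(F,\bQ)$ is surjective onto the monodromy invariants, which is all of $H^2(F,\bQ)$. This is exactly the mechanism already exploited in the proof of Proposition~\ref{rho}, and the plan is to record this surjectivity once and then feed it into each part.

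For \eqref{-11} I would dualize. The Gysin pushforward $i_*\colon H_2(F,\bQ)\to H_2(U,\bQ)$ and the pullback $i^*$ are adjoint for the evaluation pairings, $\langle i_*\alpha,\beta\rangle_U=\langle\alpha,i^*\beta\rangle_F$. As $F$ is a smooth projective surface, Poincar\'e duality makes the pairing $H_2(F,\bQ)\times H^2(F,\bQ)\to\bQ$ perfect; together with the surjectivity of $i^*$ this forces $i_*$ to be injective. Note that no positivity of $F$ enters here.

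For \eqref{-12} I would upgrade the surjectivity of $i^*$ from $H^2$ to its $(1,1)$-part. Since $i^*$ is a morphism of Hodge structures, it is enough to know $h^{2,0}(U)=0$: then $H^2(U,\bQ)=H^{1,1}(U,\bQ)$, its image is contained in $H^{1,1}(F,\bQ)$ yet equals all of $H^2(F,\bQ)$, so $H^{1,1}(U,\bQ)\to H^{1,1}(F,\bQ)$ is onto (and incidentally $h^{2,0}(F)=0$). To obtain $h^{2,0}(U)=0$ I would argue that the $e$-fibers $F$ are rational surfaces, so that $U$ is rationally connected by \cite{GHS} (applied to $e$ over the rationally connected base $X$, cf.\ \cite{KMM3}); rational connectedness gives $H^2(U,\cO_U)=0$. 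This rational connectedness also yields $H_2(\,\cdot\,,\bQ)=N_1(\,\cdot\,)_\bQ$ for both $F$ and $U$, which is what is needed to convert \eqref{-11} into a numerical statement. For \eqref{-13}, suppose $C_1,C_2\subset F$ are distinct $(-1)$-curves with $[C_1]\nequiv[C_2]$ in $N_1(U)$. By the previous sentence the classes $[C_i]$ lift uniquely to $H_2(U,\bQ)$ and come from $H_2(F,\bQ)$, so by \eqref{-11} they already agree in $N_1(F)_\bQ$; but on the smooth surface $F$ distinct irreducible curves satisfy $C_1\cdot C_2\geq 0$, while $[C_1]=[C_2]$ forces $C_1\cdot C_2=C_1^2=-1$, a contradiction.

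The one place requiring genuine input from the geometry of minimal families, rather than formal Hodge theory, is the rational connectedness of the fibers $F$ (equivalently $h^{2,0}(U)=0$), on which both \eqref{-12} and the identification of numerical with homological equivalence in \eqref{-13} rest. I expect this to be the main obstacle: one must show the smooth surface $F$ is rational, for instance by exhibiting the fibers as Fano (del~Pezzo) surfaces so that $-K_U$ is $e$-ample. Once that is in place, the remaining content is the formal package of invariant cycles and Poincar\'e duality described above.
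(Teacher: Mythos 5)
Your parts (1) and the invariant-cycle mechanism are exactly the paper's (the connectedness of the fibers, needed for Deligne's theorem to give surjectivity onto all of $H^2(F,\bQ)$, comes from simple connectedness of $X$, as the remark after Proposition~\ref{fam} notes). But there is a genuine gap where you yourself flag the ``main obstacle'': your proofs of \eqref{-12} and \eqref{-13} both run through rational connectedness of the $e$-fibers (equivalently $h^{2,0}(U)=0$ and $H_2=N_1$ for $U$ and $F$), and this is \emph{not available} at this point in the paper. When Lemma~\ref{-1} is proved, nothing is known about an $e$-fiber beyond its being a smooth projective surface; a priori it could have $p_g>0$. Ruling out exactly that possibility is the content of the subsequent Lemma~\ref{ray}, which needs the nontrivial isotriviality theorem of Oguiso--Viehweg (Theorem~\ref{isotriv}) --- if fibers were known to be rational, $K_F=K_U|_F$ would be non-nef for free and Lemma~\ref{ray} would be immediate. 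Worse, Lemma~\ref{-1}\eqref{-13} is itself an ingredient in the proof of Theorem~\ref{fib} (Step~2 of Subcase~2.1 and Step~1 of Subcase~2.2), which is what eventually establishes the fiber structure; so deriving the lemma from rationality of the fibers is circular. Your proposed repair, exhibiting the fibers as del Pezzo surfaces via $e$-ampleness of $-K_U$, is likewise unavailable: $K_U$ being $e$-nef is precisely the scenario the paper has to exclude by isotriviality, not by a direct positivity argument.

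The paper's proof avoids this input entirely. For \eqref{-12} one does not need $h^{2,0}(U)=0$: the restriction $H^2(U,\bQ)\to H^2(F,\bQ)$ is a surjection of \emph{polarizable} Hodge structures, so by semisimplicity the kernel admits an orthogonal complement mapping isomorphically onto $H^2(F,\bQ)$, and rational $(1,1)$-classes therefore lift to rational $(1,1)$-classes. For \eqref{-13} the paper works with divisors rather than with $H_2$: since distinct $(-1)$-curves satisfy $C_1\cdot C_2\geq 0>C_1^2$ (your observation, which matches the paper's opening step), one has $[C_1]\not\nequiv[C_2]$ in $N_1(F)$, hence a line bundle $\sL$ on $F$ with $\deg_{C_1}\sL\neq\deg_{C_2}\sL$; by \eqref{-12} and the Lefschetz $(1,1)$-theorem its class lifts to $\overline{\sL}\in\Pic(U)\otimes\bQ$ with $\overline{\sL}|_F\nequiv\sL$, and $\overline{\sL}$ separates $[C_1]$ and $[C_2]$ in $N_1(U)$. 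This sidesteps both the identification of numerical with homological equivalence for $1$-cycles on $U$ and any rationality statement about $F$. (Your reduction of \eqref{-11} to surjectivity of $i^*$ by duality is fine and is the same mechanism as the paper's.)
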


\begin{proof}
The first and second assertions are consequences of Deligne's invariant cycle theorem.

(\ref{-13}) Since $[C_1] \not \nequiv [C_2]$ in $N_1(F)$,
there exists a line bundle $\sL$ on $F$ such that $\deg_{C_1}\sL  \neq \deg_{C_2}\sL $.
Denote by $[\sL]$ its image in $H^{1,1}(F, \bQ )$.
By (\ref{-12}), there exists $D \in H^{1,1}(U, \bQ )$ whose restriction to $F$ is $[\sL]$.
Furthermore, by Lefschetz (1,1)-Theorem, there exists $\overline{\sL} \in \Pic (U) \otimes \bQ$ whose image in $H^{1,1}(F, \bQ )$ is $D$.
By compatibility, $\overline{\sL} |_F\nequiv \sL$.
Hence the assertion follows.

\end{proof}

The main result of this section is the following:
\begin{theorem}\label{fib}
Let X be a CP manifold with Picard number one and pseudoindex four.
Then one of the following holds:
\begin{enumerate}

\item\label{fib1} The evaluation morphism $e$ is a smooth $\bP ^2$-fibration.

\item\label{fib2} There exists the commutative diagram
\[
\begin{CD}
     U             @>f >>         W              @> g >> X\\
@V\pi VV                     @V q VV \\
     V             @>p >>        Y
\end{CD}
\]
with the following properties:

\begin{enumerate}

\item $e=g \circ f$,
\item $f$ and $g$ are smooth $\bP ^1$-fibrations,
\item $p$ and $q$ are smooth elementary Mori contractions.
\end{enumerate}

\end{enumerate}
\end{theorem}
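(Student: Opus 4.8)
The plan is to determine the isomorphism type of a fibre $F \coloneqq e^{-1}(x)$ and then assemble the global structure from it. By Proposition~\ref{fam}, $e$ is smooth of relative dimension $d-2=2$, so each $F$ is a smooth projective surface, and by adjunction $K_F = K_{U/X}|_F$. First I would check that $-K_{U/X}$ is $e$-ample --- equivalently that the fibres $F$ are Fano --- so that each $F$ is a del Pezzo surface; the point here is that $F$ is swept out by the deformations inside $F$ of the minimal curves through $x$, which have positive anticanonical degree. Granting this, $e$ is a smooth Mori contraction, so Proposition~\ref{rho} gives $\rho(F)=\rho(U)-\rho(X)=\rho(U)-1$, while Lemma~\ref{-1}(\ref{-11})--(\ref{-12}) identifies $N_1(F)$ with a subspace of $N_1(U)$ and guarantees that numerical classes of curves in a fibre are detected in $U$.

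The heart of the argument is to show that $F$ contains no $(-1)$-curve. A smooth del Pezzo surface without a $(-1)$-curve is either $\bP^2$ or $\bP^1\times\bP^1$, so this dichotomy is exactly what Theorem~\ref{fib} predicts. To exclude $(-1)$-curves I would argue at the level of the relative cone $\cNE(U/X)$, whose extremal rays correspond fibrewise to the extremal rays of the del Pezzo $F$. Were $F$ to contain a $(-1)$-curve, the associated extremal ray of $\cNE(U/X)$ would contract birationally (a relative blow-down); I would then derive a contradiction from the interaction with $\pi$, using that the families of $(-1)$-curves produced as $x$ varies remain numerically distinct in $U$ by Lemma~\ref{-1}(\ref{-13}), and that this rigidity, together with $\pi$ being a smooth $\bP^1$-fibration not contracted by $e$ and with $T_X$ nef, is incompatible with a birational relative contraction. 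This step --- ruling out $(-1)$-curves, i.e.\ forcing the relative extremal contraction to be of fibre type --- is where I expect the main difficulty to lie, and it is precisely the point for which Lemma~\ref{-1}(\ref{-13}) appears to be tailored.

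Once $F$ has no $(-1)$-curve, two cases remain. If $\rho(F)=1$, then $F\cong\bP^2$; since $e$ is smooth with connected fibres over the simply connected, rationally connected base $X$, all fibres are isomorphic to $\bP^2$ and $e$ is a smooth $\bP^2$-fibration, giving case~(\ref{fib1}). If $\rho(F)=2$, then $F\cong\bP^1\times\bP^1$. Because $X$ is simply connected the monodromy cannot interchange the two rulings, so by Lemma~\ref{-1}(\ref{-11}) each ruling determines a well-defined class in $N_1(U)$ and hence a relative extremal ray $R$ of $\cNE(U/X)$. As $-K_{U/X}$ is $e$-ample, $R$ is contractible, and the relative contraction $f\colon U\to W$ over $X$ contracts one ruling in every fibre; thus $f$ is an equidimensional smooth $\bP^1$-fibration, and the induced morphism $g\colon W\to X$ with $e=g\circ f$ is a smooth $\bP^1$-fibration whose fibres are the $\bP^1$ obtained as the quotient of the other ruling.

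Finally I would feed this into Theorem~\ref{ext}. Now $U$ carries the two smooth $\bP^1$-fibrations $f\colon U\to W$ and $\pi\colon U\to V$, and $f$ does not contract $\pi$-fibres: a $\pi$-fibre maps finitely to $X$ under $e$, hence is not contained in a fibre of $e$ and a fortiori is not a ruling $f$-fibre. Applying Theorem~\ref{ext} with $f$ in the role of the first $\bP^1$-fibration and $\pi$ as the auxiliary one, the ray generated by the $\pi$-fibres is extremal in $\cNE(W)$ and its contraction $q\colon W\to Y$ is smooth; by the symmetric statement $p\colon V\to Y$ is a smooth elementary Mori contraction with $q\circ f=p\circ\pi$. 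This yields the commutative diagram of case~(\ref{fib2}) with all the required properties, completing the argument.
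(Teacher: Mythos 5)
There are two genuine gaps, and they sit at the two load-bearing points of your plan. First, your opening claim that $-K_{U/X}$ is $e$-ample, so that every fiber $F=e^{-1}(x)$ is a del Pezzo surface, is unsupported, and the justification you give is incorrect in spirit: the minimal rational curves through $x$ are \emph{not} contained in $F$ --- they are the $e$-images of $\pi$-fibers, which meet $F$ transversally in points, while curves inside $F$ are parameter curves in the space of minimal curves through $x$, and nothing bounds their $K_U$-degree a priori. The paper's corresponding step is far weaker and already requires a deep input: Lemma~\ref{ray} produces only \emph{one} $K_U$-negative curve in a fiber, and even this needs the Oguiso--Viehweg isotriviality theorem (Theorem~\ref{isotriv}) applied to the families $U\times_X C$ over rational curves $C$, together with $X$ being $V$-rationally connected. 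Without relative Fano-ness you cannot speak of ``the extremal rays of the del Pezzo $F$,'' you cannot apply Proposition~\ref{rho} to $e$ (which must be a Mori contraction for that proposition), and the rest of your architecture has nothing to stand on. Note also that in case~(\ref{fib2}) the fibers are Hirzebruch surfaces whose type the theorem does not pin down, so the del Pezzo property is not something the paper ever establishes.

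Second, and decisively, your central step --- excluding $(-1)$-curves in the fibers --- is not only unproven (you offer no mechanism, only the expectation of ``a contradiction from the interaction with $\pi$'') but is contradicted by what actually happens. In the paper's proof, after contracting the $K_U$-negative extremal ray over $X$, Ando's classification of contractions with at most one-dimensional fibers leaves a genuinely birational subcase: $\phi\colon U\to W'$ is the blow-up of a smooth codimension-two center $Z$ with $Z\simeq X$, every $e$-fiber is $\bF_1$ (hence contains exactly one $(-1)$-curve), and this subcase is \emph{not} a contradiction --- it leads to conclusion~(\ref{fib2}) by refactoring $e$ through the ruling of $\bF_1$. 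Lemma~\ref{-1}~(\ref{-13}) is tailored to something weaker than what you want: it forbids two numerically distinct $(-1)$-curves in a single fiber from being contracted by one elementary contraction (this is exactly how the paper shows $\psi_Z\colon Z\to X$ is generically one-to-one in the birational subcase, and how it rules out reducible conics in the fiber-type subcase), but it says nothing against one $(-1)$-curve per fiber. Consequently your dichotomy ``$F\simeq \bP^2$ or $\bP^1\times\bP^1$'' is too strong, and with it the claim that $g$ is automatically a smooth $\bP^1$-fibration; in the paper that last point requires a further isotriviality argument plus the classification of targets of finite surjective morphisms to exclude positive-genus $g$-fibers (forcing $X\simeq\bP^3$, a contradiction). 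Your final appeal to Theorem~\ref{ext} to produce the smooth contractions $p$ and $q$ does coincide with the paper's Step, but it rests on the two unsupported steps above, so the proposal as written does not prove the theorem.
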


For the proof, we use the following result due to K.~Oguiso and E.~Viehweg.
\begin{theorem}[{\cite[Theorem 0.1]{OV}}]\label{isotriv}
All smooth projective families of minimal surfaces of non-negative Kodaira dimension over elliptic curves or  over $\bC ^*$ are isotrivial.
\end{theorem}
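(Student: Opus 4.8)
The plan is to show that such a family cannot vary in moduli, by playing the (vanishing) positivity of the base against Viehweg's positivity of direct images of relative pluricanonical sheaves, and then to upgrade birational isotriviality to genuine isotriviality using the uniqueness of minimal models of surfaces. Write $f\colon X \to B$ for the family, with $B$ either an elliptic curve $E$ or $\bC^*$, let $\bar B$ be the smooth compactification, and $S = \bar B \setminus B$ the boundary. First I would record the numerical coincidence that makes both bases \emph{borderline}: in either case the logarithmic canonical degree is
\[
\deg \Omega^1_{\bar B}(\log S) = 2g(\bar B) - 2 + \# S = 0,
\]
since $(g,\#S) = (1,0)$ for $E$ and $(0,2)$ for $\bC^*$. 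The goal is thus to prove that a family of minimal surfaces of non-negative Kodaira dimension $\kappa$ over such a base has variation $\mathrm{Var}(f) = 0$.

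For the core estimate I would invoke abundance for surfaces: a minimal surface of non-negative Kodaira dimension has semiample canonical bundle, so after passing to the relative canonical model the fibers become canonically polarized along their Iitaka fibration and $\omega_{X/B}$ is relatively semiample. Viehweg's weak positivity of $f_*\omega_{X/B}^{\otimes \nu}$, together with the associated Arakelov-type inequality (in the sharpened form of Viehweg--Zuo), bounds the variation of the family by the positivity of the base: a family with maximal variation $\mathrm{Var}(f) = \dim B = 1$ forces $2g(\bar B) - 2 + \# S > 0$. Since we have just computed this quantity to be $0$ on both bases, maximal variation is excluded, and as $\dim B = 1$ this leaves only $\mathrm{Var}(f) = 0$; that is, the family is birationally isotrivial. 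This disposes of the cases $\kappa = 1$ and $\kappa = 2$ at the birational level.

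The genuinely delicate case is $\kappa = 0$, where $\omega_{X/B}$ is numerically trivial on fibers and so carries no moduli information; here I would replace the positivity of pluricanonical images by the positivity package for the Hodge bundle of the weight-one or weight-two variation of Hodge structure $R^{\bullet}f_*\bC$. The Arakelov inequality for the Hodge bundle again bounds its degree by $(2g(\bar B) - 2 + \# S)$ times a Hodge number, so the vanishing of the log-canonical degree forces the Hodge bundle to be numerically flat, the variation of Hodge structure to be unitary, and, by Deligne semisimplicity, the period map to be constant. For abelian and K3 fibers a constant period map already yields isotriviality via the Torelli theorem, and the remaining $\kappa = 0$ surfaces---Enriques and bielliptic---reduce to the K3 and abelian cases after passing to their canonical finite \'etale covers.

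Finally I would upgrade birational isotriviality to isotriviality. Because each fiber is a minimal surface of non-negative Kodaira dimension, its minimal model is the unique minimal model in its birational class; hence two birationally equivalent fibers are in fact isomorphic, and $\mathrm{Var}(f)=0$ means every smooth fiber is isomorphic to one fixed minimal surface $F_0$, which is exactly the assertion. \textbf{Main obstacle.} I expect the $\kappa = 0$ step to be the crux: passing from a numerically flat Hodge bundle to a genuinely constant family requires controlling the monodromy (quasi-unipotent by the monodromy theorem, and of finite order once the Hodge bundle is flat) and then feeding constancy of periods back through Torelli, with the extra bookkeeping of relating the given smooth minimal family to its possibly singular relative canonical model.
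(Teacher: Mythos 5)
The first thing to note is that the paper contains \emph{no proof} of this statement: it is quoted as Theorem~0.1 of Oguiso--Viehweg and used as a black box (in the proof of Lemma~\ref{ray} and twice in the proof of Theorem~\ref{fib}). So the comparison has to be with the proof in [OV] itself, not with anything internal to this paper.

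Measured against that, your proposal is a coherent modern route, but you should be clear about what it actually does in the crucial case $\kappa = 1$: there you are not giving an argument, you are citing the Viehweg--Zuo base-space theorem (families of smooth minimal models with $\mathrm{Var}(f)=\dim B$ have base of log general type), a later and independent general theorem whose specialization to one-dimensional bases of log-canonical degree zero \emph{is} the statement to be proved. Historically, the properly elliptic case is exactly why the Oguiso--Viehweg paper exists: weak positivity of $f_*\omega_{X/B}^{\otimes\nu}$ by itself does not produce the needed Arakelov-type bound there, because pluricanonical data only see the Iitaka fibration, and [OV] must track the additional moduli of the elliptic fibration. Your sentence claiming that Viehweg positivity ``together with the associated Arakelov-type inequality'' excludes maximal variation therefore conceals the entire content of the hard case inside a citation. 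As a proof-by-reference to Viehweg--Zuo the logic is sound (their result does not rely on [OV]); as a self-contained argument it is not one.

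Beyond that, two concrete gaps. First, $\mathrm{Var}(f)=0$ together with uniqueness of minimal models of non-negative Kodaira dimension gives only that the \emph{general} fibers are isomorphic to a fixed $F_0$; isotriviality in the sense used here (and needed in the applications, e.g.\ isotriviality of $e$ over every rational curve in Lemma~\ref{ray}) requires all fibers to be isomorphic, which needs a specialization argument of Matsusaka--Mumford type for smooth non-ruled limits. You assert the conclusion without this step. Second, in the $\kappa=0$ case, a degree-zero (numerically flat) Hodge bundle yields a \emph{unitary} local system, not a constant period map: a nontrivial unitary character of $\pi_1(\bC^*)$ or of an elliptic curve already gives non-constant periods with an isotrivial family. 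One needs integrality of the monodromy to deduce finiteness, pass to a finite \'etale cover where the period map becomes constant by the theorem of the fixed part, and only then apply Torelli; this still proves isotriviality, and you flag the difficulty yourself, but as written the claim ``the period map [is] constant'' is false, and the bookkeeping for the Enriques and bielliptic reductions (existence of the relative canonical cover, possibly after finite base change) is also left implicit.
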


\begin{lemma}\label{ray}
For a CP manifold $X$ with Picard number one and pseudoindex four,
there exists a $K_U$-negative curve contained in an $e$-fiber.
In particular, there exists a $K_U$-negative extremal ray $R$ of $\cNE (U)$ which is contracted by $e$.
\end{lemma}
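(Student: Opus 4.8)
The plan is to show that the $e$-fibers, which are smooth projective surfaces of dimension $d-2 = 2$ by Proposition~\ref{fam}~(\ref{fam2}), cannot have non-negative Kodaira dimension, forcing them to contain a $K_U$-negative curve. First I would recall that $e \colon U \to X$ is smooth with connected $2$-dimensional fibers. A $K_U$-negative curve inside an $e$-fiber $F$ is, by adjunction on the smooth fiber and the triviality of the normal bundle $N_{F/U}$ (since $e$ is smooth), the same as a $K_F$-negative curve; so the real content is to produce a rational curve on $F$, i.e.\ to rule out $F$ being a surface with $K_F$ nef (equivalently, of non-negative Kodaira dimension). Once such a curve exists, a standard bend-and-break/cone theorem argument produces a $K_U$-negative extremal ray $R$ of $\cNE(U)$ generated by a curve contracted by $e$, since the curve lies in a fiber of $e$.

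The heart of the argument is to apply the Oguiso--Viehweg Theorem~\ref{isotriv} to reach a contradiction if all $e$-fibers had non-negative Kodaira dimension. The idea, following the proof of \cite[Lemma~1.2.2]{Mok}, is this: $U$ carries the smooth $\bP^1$-fibration $\pi \colon U \to V$, whose fibers are minimal rational curves mapping (via $e$) to rational curves on $X$. Pick a general point and restrict the family $e$ of surfaces to a suitable rational or elliptic test curve $T \subset V$ coming from the $\pi$-structure; concretely, one wants to find inside $U$ an irreducible curve $B$ that is not contracted by $e$ and whose image generates a family, so that pulling back the family of $e$-fibers over $B$ produces a smooth projective family of surfaces over a base that is either an elliptic curve or isomorphic to $\bC^*$ (or a rational curve, after deleting finitely many points). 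If the fibers had non-negative Kodaira dimension, Theorem~\ref{isotriv} would force this family to be isotrivial; but the $\pi$-fibers sweep out curves that move nontrivially, and one derives a contradiction with the way the minimal rational curves deform, since an isotrivial family of surfaces of non-negative Kodaira dimension is incompatible with the abundance of deformations forced by the nef tangent bundle and the fact that $X$ has Picard number one.

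I expect the main obstacle to be the construction of the correct test base over which to apply Theorem~\ref{isotriv}. The theorem requires a \emph{smooth projective} family over an elliptic curve or over $\bC^*$, so one must carefully arrange the base curve and ensure the family of $e$-fibers restricted to it remains smooth and projective; this typically means choosing a general complete intersection curve in $V$ or a general $\pi$-fiber's image and controlling the locus where smoothness could fail. The delicate point is translating the geometry of the $\pi$-fibration (whose fibers are rational) into a statement about the family structure of the $e$-fibers, and then extracting the contradiction from isotriviality. Granting that the surfaces have non-negative Kodaira dimension leads, via isotriviality and the structure of $U$ as a $\bP^1$-bundle over $V$, to a numerical or monodromy contradiction; the cleanest way to phrase the final contradiction is probably to combine isotriviality with Lemma~\ref{-1} on the injectivity of $H_2(F,\bQ) \to H_2(U,\bQ)$, which constrains how the fibers can sit inside $U$. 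Once non-negative Kodaira dimension is excluded, the surface fiber $F$ is uniruled, hence contains a rational (thus $K_F$-negative, thus $K_U$-negative) curve, and the extremal ray $R$ follows immediately from the cone theorem applied to $U$.
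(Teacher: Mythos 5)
Your overall reduction matches the paper's: argue by contraposition that if $K_U$ were $e$-nef then every $e$-fiber $F$ would be a minimal surface of non-negative Kodaira dimension (indeed $K_U|_F=K_F$ since $e^*K_X$ is trivial on $F$), apply the Oguiso--Viehweg Theorem~\ref{isotriv} over a minimal rational curve of $X$ with two points removed (i.e.\ over $\bC^*$), and finish with the relative cone theorem applied to $e$. Two small points first: your worry about arranging a smooth projective family over the test curve is moot, since $e$ is smooth everywhere by Proposition~\ref{fam}~(\ref{fam2}), so every base change is automatically such a family; and the test curve must live in $X$ --- in fact it should be a member $C$ of the family $V$ itself, not a curve ``$T\subset V$'' as you write at one point, because membership in $V$ is what produces the extra structure needed below.

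The genuine gap is the decisive step: you never extract a contradiction from isotriviality, and the two mechanisms you gesture at (an unspecified ``numerical or monodromy contradiction'', or combining isotriviality with Lemma~\ref{-1}) are not the ones that work --- Lemma~\ref{-1} constrains how a single fiber sits homologically inside $U$ and sees nothing of the variation of the family. The paper's mechanism is concrete: for a minimal rational curve $C$ with $[C]\in V$, the fiber product $U\times_X C$ carries a tautological section $s\colon C\to U\times_X C$ corresponding to $[C]$ (the $\pi$-fiber over $[C]$), and $\pi$ contracts $s(C)$ to the point $[C]\in V$. Isotriviality of $U\times_X C\to C$ together with this contracted section forces $\pi(U\times_X C)$ to be two-dimensional, i.e.\ $\pi(e^{-1}(x))$ is \emph{independent of} $x\in C$. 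Since $\rho(X)=1$, $X$ is $V$-rationally chain connected by \cite[Lemma~3]{KMM2}, so this constancy propagates along chains of minimal rational curves to all of $X$, giving $V=\pi(U)=\pi(e^{-1}(x))$ of dimension $2$; this contradicts $\dim V=n+d-3=n+1\geq 4$ from Proposition~\ref{fam}~(\ref{fam1}). Without the section trick and this dimension count on $V$, isotriviality alone yields no contradiction --- an isotrivial fibration is in no abstract conflict with the nefness of $T_X$ or with $\rho(X)=1$ --- so your sketch cannot be completed as stated.
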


\begin{proof}
The idea of this proof is in \cite[the proof of Lemma~1.2.2]{Mok}.

If $K_U$ is $e$-nef, then every $e$-fiber is a minimal surface of  non-negative Kodaira dimension.
Hence, for every rational curve $C \subset X$, $e$ is isotrivial over $C$ by Theorem~\ref{isotriv}.
Let $C$ be a rational curve parametrized by $V$,
then $U \times _X C$ is an isotrivial family of minimal surfaces.
Furthermore, there exists a section $s:C \to  U \times _X C$ corresponding to $[C] \in V$, which is contracted by $\pi \colon U \times _X C \to V $.
Hence the image $\pi (U \times _X C)$ is two dimensional and $\pi (e^{-1}(x))$ is independent of $x \in C$.
Moreover, since $\rho (X) =1$, $X$ is $V$-rationally connected by \cite[Lemma~3]{KMM2}.
Hence $\dim V =2$.
This contradicts Proposition~\ref{fam}~(\ref{fam1}).
Hence $K_U$ is not $e$-nef.
The last assertion follows from the cone theorem.

\end{proof}

\begin{proof}[Proof of Theorem~\ref{fib}]
By Lemma~\ref{ray}, we have the contraction $\phi$ of $R$ by the contraction theorem \cite[Theorem 3-2-1]{KMM}
\[
\begin{CD}
U @>\phi>> W' @>\psi>> X,
\end{CD}
\]
where $e= \psi \circ \phi $.

\case{Case 1. $W' \simeq X$.}
In this case, $e$ is an elementary Mori contraction and every fiber of $e$ is isomorphic to $\bP ^2$ by Proposition~\ref{rho}.
Hence the case (\ref{fib1}) of Theorem~\ref{fib} occurs.

\case{Case 2. $W' \not \simeq X$.}
In this case, every fiber of $\phi$ has dimension at most $1$. Such morphisms are classified in \cite{A} (see also \cite[Theorem 4.2.1]{AM} for the statement).

\case{Subcase 2.1. $\phi$ is a birational morphism.}
In this case, by \cite{A}, $W'$ is a smooth projective variety and $\phi$ is a blow-up of a smooth codimension two subvariety $Z \subset W' $. 
We will denote by $E$ the exceptional divisor.
We prove that every $e$-fiber is isomorphic to $\bF _1$ and the case (\ref{fib2}) of Theorem~\ref{fib} occurs.

\step{Step 1.}
$\psi _Z\colon Z \to X$ is finite and hence surjective.
Otherwise, there would exist a curve $D \subset Z$ contracted by $\psi$. Then
$e^{-1}(\psi(D)) = \phi ^{-1}(D)$ by the dimensional reason. Hence $\psi^{-1}(\psi(D))=D$, contradicting the fact that $\psi$ is of relative dimension two.  

\step{Step 2.}
$\psi_Z\colon Z \to X$ is an isomorphism.
Indeed, $\psi_Z\colon Z \to X$ is generically one-to-one by Lemma~\ref{-1}~(\ref{-13}).

\step{Step 3.}
We prove that $\pi _E\colon E \to V$ is surjective.
Otherwise, $\pi _E\colon E \to V$ is a contraction of fiber type.
Since $\pi$ is a smooth $\bP ^1$-fibration,
$\pi _E\colon E \to \pi (E)$ is a smooth $\bP ^1$-fibration and $\pi (E)$ is smooth.
In \cite{W1}, such varieties are completely classified, and we have $Z \simeq \bP ^3$ since $i_X = i_Z = 4$.
However, the universal family of lines on $\bP ^3$ is a $\bP ^2$-bundle.
This contradicts the fact that $\phi$ is a birational morphism.

\step{Step 4.}
We have $N_1(E) \simeq N_1(V)$ and $\cNE (E) \simeq \cNE (V)$ by $(\pi _ E )_*$.
Indeed, $\rho_V = 2$ since $\pi _E\colon E \to V$ is surjective and $\rho_V \geq 2$.

\step{Step 5. Contraction of $V$.}
By Step 4, the ray corresponding to the contraction $\phi _E$ defines a ray $R'$ of $\cNE (V)$ and we have the following diagram
\[
\begin{CD}
     E        @>\phi _E>>     Z                                  \\
@VVV                          @VVV                                 \\
     U        @>\phi>>         W'             @>\psi>> X  \\
@V\pi VV                    @V \beta VV                       \\
     V        @>\alpha>>     M,
\end{CD}
\]
where $\alpha$ is the contraction of the ray $R'$.

\step{Step 6.}
We have $\dim M = n$.
Indeed, $\beta _Z\colon  Z \to M$ is a finite surjective morphism since it is surjective and $\rho _Z = \rho _M =1$.

\step{Step 7.}
In this step, we prove that every fiber of $e$ is isomorphic to $\bF _1$ and $\psi$ is a smooth $\bP ^2$-fibration by a similar argument as in the proof of Lemma~\ref{ray}.
If $K_W'$ is $\psi$-nef, then $\psi$ is isotrivial on every rational curve on $X$ by Theorem~\ref{isotriv}. Hence $\dim M = 2$.
This contradicts Step 6. 
Therefore $\psi$ is an elementary Mori contraction, and hence every fiber of $\psi$ is isomorphic to $\bP ^2$ by Proposition~\ref{rho}.

\step{Step 8. Conclusions.}
By Step 7, $e$ is a Mori contraction.
Hence we get another factorization $e\colon U \xrightarrow{f} W \xrightarrow{g} X$.
The same argument as in Subcase 2.2 below shows that $f$ and $g$ are smooth $\bP ^1$-fibrations.

\case{Subcase 2.2. $f$ is a fiber type contraction.}
In this case, by \cite{A}, $f$ is a flat conic bundle, $W'$ is a smooth variety and each fiber $F$ of $f$ is one of the following:
\begin{enumerate}

\item\label{conic1} $F \simeq \bP ^1$ and $-K_U.F = 2$.

\item\label{conic2} $F \simeq C_1 \cup C_2$ and $-K_U.C_i = 1$, where $C_1$ and $C_2$ are smooth rational curves.

\item\label{conic3} $F\red \simeq \bP ^1$,
$-K_U.F\red =1$,
$F$ is a nonreduced conic
and $N_{F\red /U} \simeq \cO (1, -1^2, 0^{n-2} )$ or $\cO (1, -2, 0^{n-1} )$.

\end{enumerate}

Set $f\coloneqq \phi$, $g\coloneqq \psi$ and $W\coloneqq W'$.
We prove that the case (\ref{fib2}) of Theorem~\ref{fib} occurs.

\step{Step 1.}
If there is a fiber of type (\ref{conic2}), then $C_1 \not \nequiv C_2$ in $N_1(U)$ by Lemma~\ref{-1}~(\ref{-13}).
This contradicts the fact that $f$ is an elementary contraction.
Furthermore, for a $(-1)$-curve $C$ in a fiber, $N_{C/U} \simeq \cO(-1, 0^{n} )$.
Hence there is no fiber of type (\ref{conic3}).
Therefore, $f$ is a smooth $\bP ^1$-fibration.

\step{Step 2.}
In this step, we prove that $g$ is a smooth $\bP ^1$-fibration by a similar argument as in the proof of Lemma~\ref{ray}.
By Step 1, $g$ is a smooth fibration of relative dimension one.
On the other hand, by Step 1 and Theorem~\ref{ext}, there exists the $V$ (resp.\ $W$)-rationally connected quotient morphism $q\colon W\to Y$ (resp.\ $p\colon V \to Y$).

Assume that the genus of $g$-fibers are positive,
then $g$ is isotrivial on any rational curve on $X$.
Hence $\dim Y =1$ and the relative dimension of $q$ is $n$.
Therefore, the restriction $g|_{q\text{-fiber}}$ is finite and surjective onto $X$.
Note that any $q$-fiber is rational homogeneous manifold as in Remark~\ref{remquot} and that $i_X=4$.
Hence, by \cite{HM} or \cite{L}, we have $X \simeq \bP ^3$.
This gives a contradiction.
Hence $g$ is a smooth $\bP^1$-fibration, completing the proof.

\end{proof}

\section{Proof of Theorem~\ref{CP5}}
\label{pf_CP5}

First, we prove the proposition below, following 
the strategy of \cite{MOSW}.

\begin{proposition}\label{rhm1}
Let $X$ be a CP manifold with Picard number one and pseudoindex four.
Assume that the evaluation morphism $e$ is a smooth $\bP^{2}$-fibration.
Then $X$ is a rational homogeneous manifold.
In particular, $X$ is isomorphic to $\bP ^3$ or the Lagrangian Grassmannian $LG(3,6)$.
\end{proposition}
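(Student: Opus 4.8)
The plan is to follow the strategy of \cite{MOSW}: build a CP manifold of maximal Picard number dominating $X$, identify it with a complete flag manifold by \cite{OSWW}, and then read off $X$. First I record the numerics. By Theorem~\ref{fib}~(\ref{fib1}) the morphism $e$ is an elementary smooth $\bP^2$-fibration, so $e$ and $\pi$ are the two elementary contractions of $U$; hence $\rho(U)=2$ and $\rho(V)=\rho(X)=1$, and $n\coloneqq\dim X\ge 3$ since $i_X=4\le n+1$. The two extremal rays of $U$ are generated by a line $\ell$ in an $e$-fibre, with $-K_U.\ell=-K_{\bP^2}.\ell=3$, and by a $\pi$-fibre, with $-K_U.(\pi\text{-fibre})=2$; in particular $-K_U$ is ample and $U$ is Fano.

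To resolve the $\bP^2$-fibration $e$ into $\bP^1$-fibrations, let $W\to X$ be the relative variety of lines contained in the fibres of $e$ (a smooth $\bP^2$-fibration, the lines in $\bP^2$ forming a dual $\bP^2$), and let $\widehat U$ be the relative incidence variety of pairs (point, line through it) inside the fibres of $e$. Then the two projections $p\colon\widehat U\to U$ and $q\colon\widehat U\to W$ are smooth $\bP^1$-fibrations, so $\rho(\widehat U)=\rho(U)+1=3$ by Proposition~\ref{rho} (applied to $p$, noting $U$ is rationally connected), and the fibre of $\widehat U\to X$ over $x$ is the full flag manifold of $e^{-1}(x)\cong\bP^2$, namely $A_2/B$. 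The heart of the argument is to show that $\widehat U$ is Fano and that all of its elementary contractions are smooth $\bP^1$-fibrations. Two of them are $p$ and $q$; the remaining one is produced, together with the smoothness of its contraction, by the analysis of manifolds carrying two smooth $\bP^1$-fibrations in Theorem~\ref{ext} and Remark~\ref{remquot} (applied to the pair $(p,q)$, which also exhibits the rationally connected quotient $\widehat U\to X$ with $A_2/B$-fibres). Granting this, \cite{OSWW} identifies $\widehat U$ with a complete flag manifold $G/B$; since $\dim X=\dim(G/B)-3\ge 3$, the group $G$ is simple of rank $3$.

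It remains to identify $X$. The fibration $\widehat U\to X$ realises $X$ as $G/P$ with $B\subset P$ and $P/B\cong A_2/B$; since $\rho(X)=1$, $P$ is a maximal parabolic whose Levi factor has semisimple part $A_2$. Deleting one node from the rank-three Dynkin diagrams $A_3,B_3,C_3$ so that an $A_2$ survives leaves precisely $\bP^3=A_3/P_1$, $\bQ^6=B_3/P_3$ and $LG(3,6)=C_3/P_3$ (the remaining maximal parabolics have Levi factors $A_1\times A_1$ or $B_2$). Of these, $\bQ^6$ has pseudoindex $6$, so the hypothesis $i_X=4$ rules it out, and we conclude that $X\cong\bP^3$ or $X\cong LG(3,6)$, both of which are rational homogeneous.

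The main obstacle is the middle step: showing that $\widehat U$ is Fano with every elementary contraction a smooth $\bP^1$-fibration, which is exactly the hypothesis of \cite{OSWW}. The two tautological fibrations $p$ and $q$ are immediate and the Fano property is a short cone computation, but controlling the third extremal ray — that it is $K$-negative of length two with smooth contraction — is delicate; this is precisely what the machinery of Theorem~\ref{ext} and the Picard-number-two flag classification recalled in Remark~\ref{remquot} are meant to supply. Once $\widehat U\cong G/B$ is in hand, the final identification is routine bookkeeping with rank-three root data.
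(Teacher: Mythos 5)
Your $\widehat U$ is exactly the paper's $M=\bP(T_e)$ (the fibre of $\bP(T_e)\to X$ over $x$ is $\bP(T_{\bP^2})$, the complete flag of $e^{-1}(x)$), and your endgame via \cite{OSWW} is the paper's as well; but the step you yourself flag as the main obstacle is genuinely open in your write-up, and the tool you propose cannot close it. Theorem~\ref{ext} and Remark~\ref{remquot} applied to the pair $(p,q)$ only contract the face of $\cNE(\widehat U)$ spanned by the $p$- and $q$-rays, i.e.\ they reconstruct the quotient $\widehat U\to X$ with $A_2/B$-fibres --- which you already have by construction. They give no information whatsoever about the third extremal ray, which is contracted by none of $p$, $q$, or $\widehat U\to X$, so the hypothesis of \cite{OSWW} (all elementary contractions of the Fano manifold $\widehat U$ are smooth $\bP^1$-fibrations) is not reached by your argument.

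The paper's mechanism is different and hinges on a computation you omit entirely: for a $\pi$-fibre $C$, writing $U\times_X C=\bP(\sF)$ with $\sF$ normalized, and using the section coming from $C\subset U$ together with $K_e.C=2$, one finds $\sF\simeq\cO(1^2,0)$, hence $T_e|_C\simeq\cO(-1)\oplus\cO(-1)$. This is precisely what guarantees that every fibre of $\pi\circ p\colon M\to V$ is $\bP(T_e|_C)\simeq\bP^1\times\bP^1$, a CP manifold; then \cite[Theorem~4.4]{SW} (which also produces $q$, from the $\bP(T_{\bP^2})$-fibres of $e\circ p$) yields a \emph{third} smooth $\bP^1$-fibration $r\colon M\to L$. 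Only then does the paper invoke Theorem~\ref{ext}, and it applies it to the pair $(q,r)$, not $(p,q)$: the resulting contraction $M\to Q$ of the $2$-dimensional face spanned by the $q$- and $r$-rays, together with $e\circ p$ and $\pi\circ p$, shows that $\cNE(M)$ is simplicial with $\rho_M=3$ and that all three elementary contractions are smooth $\bP^1$-fibrations, whence $M$ is Fano and \cite{OSWW} applies. Note the gap is real, not cosmetic: only $\deg T_e|_C=-K_e.C=-2$ comes for free, so a priori the fibres of $\pi\circ p$ could be $\bP(\cO\oplus\cO(-2))\simeq\bF_2$, which admits no second $\bP^1$-fibration, and none of the tools you cite would then produce the third ray. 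By contrast, your concluding rank-three Dynkin bookkeeping is correct and is actually more detailed than the paper, which asserts the identification with $\bP^3$ or $LG(3,6)$ without argument.
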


\begin{remark}
In this proposition, we \emph{do not} assume that $X$ 
has dimension five.
Note that $\dim LG(3,6) = 6$.

\end{remark}

\begin{proof}[Proof of Proposition~\ref{rhm1}]
Fix an arbitrary $\pi$-fiber $C$.

\begin{claim}
$T_e|_C \simeq \cO (-1) \oplus \cO (-1)$.
\end{claim}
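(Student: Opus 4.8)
The plan is to determine the rank-two bundle $T_e|_C$ by combining a short intersection-theoretic computation of its determinant with a deformation-theoretic description of the fibers of $e$. Throughout write $\ell \coloneqq e(C)$, a minimal rational curve with $-K_X.\ell = i_X = 4$, and note that $e|_C \colon C \to \ell$ has degree one. First I would pin down the determinant: since $e$ is smooth, $\omega_U = \omega_e \otimes e^*\omega_X$, and restricting to the $\pi$-fiber $C$ gives $\omega_U|_C = \omega_\pi|_C \otimes \pi^*\omega_V|_C = \cO(-2)$ together with $\deg(e^*\omega_X|_C) = K_X.\ell = -4$. Hence $\deg(\det T_e|_C) = -2$.

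Next I would locate $T_e|_C$ inside $T_U|_C$. As $C$ is a fiber of the smooth $\bP^1$-fibration $\pi$, its normal bundle is trivial, so $N_{C/U} \cong \pi^*T_V|_C \cong \cO^{\oplus(n+1)}$ and $T_U|_C \cong \cO(2) \oplus \cO^{\oplus(n+1)}$, where $n = \dim X$ and $T_\pi|_C = \cO(2)$ is the first summand. Because $e|_C$ is finite onto $\ell$, the composite $T_\pi|_C \to T_U|_C \to e^*T_X|_C$ is nonzero, so $T_e|_C \cap T_\pi|_C = 0$; therefore the projection $T_U|_C \to N_{C/U}$ embeds $T_e|_C$ as a rank-two subbundle of the trivial bundle $\cO^{\oplus(n+1)}$. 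Since every subbundle of a trivial bundle on $\bP^1$ has all summands of degree $\leq 0$, the determinant computation forces $T_e|_C \cong \cO(-1) \oplus \cO(-1)$ or $\cO \oplus \cO(-2)$.

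To exclude the unbalanced case I would identify $T_e|_C$ explicitly. The fiber $e^{-1}(e(p))$ is the space $V_{e(p)}$ of minimal curves through $e(p)$, whose Zariski tangent space at $[\ell]$ is $H^0(\ell, N_{\ell/X}(-p))$ by standard deformation theory; thus the embedding $T_e|_C \hookrightarrow \pi^*T_V|_C = H^0(\ell, N_{\ell/X}) \otimes \cO_C$ identifies $T_e|_C$, fiberwise and hence (comparing ranks and degrees) as a subbundle, with the kernel $M$ of the evaluation map $H^0(\ell, N_{\ell/X}) \otimes \cO_C \to N_{\ell/X}$. Crucially, positivity here is automatic rather than generic: because $T_X$ is nef, $e|_C^*T_X$ is nef, so $N_{\ell/X}$ is globally generated of degree $i_X - 2 = 2$ for \emph{every} fiber $C$. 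For any globally generated bundle of degree $2$ on $\bP^1$ the evaluation kernel is $\cO(-1) \oplus \cO(-1)$ (one checks this on each of the two possible splitting types $\cO(1)^{\oplus 2}\oplus\cO^{\oplus(n-3)}$ and $\cO(2)\oplus\cO^{\oplus(n-2)}$), which yields the claim.

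I expect the main obstacle to be making the deformation-theoretic identification $T_e|_C \cong M$ fully rigorous: one must confirm that $e^{-1}(x)$ is the family of minimal curves through $x$ with the expected smooth structure, that the fiberwise identification $T_e|_{C}$ at $p$ with $H^0(\ell, N_{\ell/X}(-p))$ is compatible with the differential of $\pi$, and that this globalizes to an isomorphism of bundles rather than merely a generic one (the degree equality $\deg T_e|_C = \deg M = -2$ upgrades the generic identification to an isomorphism). By contrast, the narrowing to two cases and the final kernel computation are routine once the determinant and the embedding into $\cO^{\oplus(n+1)}$ are in place.
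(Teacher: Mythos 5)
Your first two steps are correct and run parallel to the paper's numerology: the paper likewise starts from $K_U.C=-2$ and $K_X.e(C)=-4$, hence $-K_e.C=-2$. But note two things. First, the claim sits inside the proof of Proposition~\ref{rhm1}, where $e$ is \emph{assumed} to be a smooth $\bP^2$-fibration, and the paper exploits this decisively instead of deformation theory: base-changing $e$ along $C\to X$ gives $U_C\cong\bP(\sF)$ for a split rank-$3$ bundle $\sF\cong\cO(a,b,c)$ on $C$; the section coming from $C\subset U$ is contracted by $\pi$, which forces the corresponding quotient to be $\cO(c)$ with $a\geq b>c$; then $-K_e.C=-2$ gives $c_1(\sF)=3c+2$, so after normalizing, $\sF\cong\cO(1^2,0)$ and $T_e|_C\cong\cO(-1)^{\oplus 2}$ falls out of the relative Euler sequence, uniformly for \emph{every} fiber $C$. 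Second, a small slip in your step 2: the composite $T_\pi|_C\to e^*T_X|_C$ being nonzero only gives that $T_e|_C\to\pi^*T_V|_C$ is injective as a sheaf map, not a subbundle inclusion; that is harmless for the degree bound (there are no nonzero maps $\cO(a)\to\cO$ for $a>0$), but it matters below.

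The genuine gap is in your exclusion of $\cO\oplus\cO(-2)$, and it is worse than the globalization issue you flag: the identification $T_e|_C\cong M$ presupposes that the normalization $f\colon C\to\ell\subset X$ is an \emph{immersion}. Nefness of $T_X$ makes every member free (so $V$ is smooth and $T_{[\ell]}V\cong H^0(N_f)$), but not immersed. If $df$ vanishes to total order $t\geq 1$, then $N_f$ acquires torsion of length $t$ with nef locally free quotient of degree $2-t$, so ``globally generated of degree $2$ for every fiber'' fails; moreover $T_e|_C\to\pi^*T_V|_C$ drops rank exactly at the non-immersion points (there $T_{C,p}\subset T_{e,p}$), so $T_e|_C$ is not a subbundle of the trivial bundle and the fiberwise description of $T_e$ by $H^0(N_f(-p))$ breaks down. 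Your proposed repair---rank and degree bookkeeping upgrading a generic identification---cannot close this, because the comparison bundle $M$ is itself the wrong object for such $f$; and pure numerics cannot finish either: the surjection $\cO^{\oplus 3}\twoheadrightarrow\cO(2)$ given by $(x^2,y^2,0)$ has kernel $\cO\oplus\cO(-2)$, so a rank-two, degree $-2$ subbundle of a trivial bundle with nef quotient can perfectly well be unbalanced. Since the paper needs the claim for an \emph{arbitrary} $\pi$-fiber (it is used to conclude that every fiber of $\pi\circ p$ on $\bP(T_e)$ is $\bP^1\times\bP^1$), you would have to rule out non-immersed members---which is not known a priori on a CP manifold---or use the $\bP^2$-bundle structure as the paper does. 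For immersed members your argument is correct, and it is an attractive, more intrinsic alternative that does not invoke the $\bP^2$-fibration hypothesis; but as it stands it does not cover all fibers, which is precisely what the claim asserts.
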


\begin{proof}[Proof of Claim]
Consider the base change of $e$ over $C$:
\[
\begin{CD}
    U_C       @>e_C >>         C     \\
@V VV                            @VVV   \\
    U           @>e >>             X     \\
@V\pi VV                                    \\
    V.
\end{CD}
\]
Then there exists a natural section $s\colon C \to U_C$ corresponding to $C \subset U$.
Since $e$ is a $\bP ^2$-fibration,
there exists a rank $3$ vector bundle
$ \sF \simeq \cO (a,b,c)$
on $C$ such that $\bP (\sF) \simeq U_C$.
Furthermore, there exists a quotient
$ \sF \to \sL \simeq \cO (\ell )$ corresponding to the section $s$.
We may assume that $a\geq b \geq c$ and $\sF$ is normalized
i.e.\ $c_1(\sF )=0$, $1$ or $2$.
We will denote by $\xi$ the tautological divisor on $\bP (\sF)$.

Since $\pi|_{U_C}$ contracts section $s(C)$,
we have $\sL  \simeq  \cO (c) $.
Furthermore, $a\geq b > c$. Indeed, $\pi|_{U_C}$ contracts only curves.

Since $K_{U}.C = -2$ and $K_{X}.e(C) = -4$, we have $K_{e}.C = 2$.
It follows that $0= 3c+2-c_1(\sF )$ since $-K_{e} \simeq 3 \xi + e_C^{*}(-c_1(\sF ))$ on $U_C$.
Therefore, $c_1(\sF )=2$ and $c=0$.
This implies that $\sF \simeq \cO (1^2,0)$.

\end{proof}

Set $M \coloneqq  \bP (T_e)$,
and we denote by $p$ the projection $M \to U$.
Then every fiber of $e \circ p$ is isomorphic to $\bP (T_{\bP ^2})$.
Note that $\bP (T_{\bP ^2})$ is a CP manifold.
Hence, by \cite[Theorem 4.4]{SW}, there exists a smooth $\bP ^1$-fibration $q\colon M \to N $ which is different from $p\colon M \to U$, and we have the following diagram:
\[
\begin{CD}
     M          @>q >>       N    \\
@Vp VV                     @VVV  \\
     U          @>e >>       X.
\end{CD}
\]

Also, since $T_e|_C \simeq \cO (-1) \oplus \cO (-1)$, every fiber of $\pi \circ p$ is isomorphic to $\bP ^1 \times \bP ^1$ which is a CP manifold.
Hence, there exists a third smooth $\bP ^1$-fibration $r\colon M \to L $ which is different from $p$ and $q$:
\[
\begin{CD}
     L     @<r<<              M          @>q >>       N        \\
@V VV                     @V p VV                     @VVV  \\
     V     @<\pi <<         U          @>e >>        X.
\end{CD}
\]
Furthermore, by Theorem~\ref{ext} for $q$ and $r$, there exists the quotient
\[
\begin{CD}
    M         @>q >>       N     \\
@Vr VV                   @VVV   \\
     L        @> >>          Q
\end{CD}
\]
and $M \to Q$ is a contraction of a $2$-dimensional face of $\cNE (M)$.
This implies that $M$ is a Fano manifold whose elementary contractions are smooth $\bP ^1$-fibrations (because $\rho _M =3$).
Hence $M$ is a rational homogeneous manifold by \cite{OSWW} , and so is $X$. 
\end{proof}

\begin{lemma}\label{neg}
If the evaluation morphism $e$ is as in Theorem~\ref{fib}~(\ref{fib2}),
then 
\[
-K_f.C = -K_g.f(C) = -1
\]
for any $\pi$-fiber $C$.
\end{lemma}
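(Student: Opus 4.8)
The plan is to reduce the statement to the single numerical identity $-K_f.C+(-K_g.f(C))=-2$, to rephrase each relative degree in terms of the index of a homogeneous variety, and then to pin that index down using the nef tangent bundle of $X$ together with the hypothesis $i_X=4$. First I would record the degree computation, which is routine. Since $\pi$ is a smooth $\bP^1$-fibration, $-K_U.C=2$ for every $\pi$-fiber $C$, while $\ell\coloneqq e(C)$ is a minimal rational curve with $-K_X.\ell=4$, and $e|_C$ is the normalization of $\ell$, hence of degree one; as $e=g\circ f$ does not contract $C$, both $f|_C$ and $g|_{f(C)}$ are finite of degree one. From $K_U=K_{U/X}+e^*K_X$ we get $-K_e.C=-K_U.C-(-K_X.\ell)=2-4=-2$, and from $K_{U/X}=K_{U/W}+f^*K_{W/X}$ together with $f_*C=f(C)$ we obtain $-K_e.C=-K_f.C+(-K_g.f(C))$. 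Thus $-K_f.C+(-K_g.f(C))=-2$, so it suffices to prove $-K_f.C=-1$.

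Next I would identify $-K_f.C$ through the fibers of $h\coloneqq q\circ f=p\circ\pi$. By Remark~\ref{remquot} the fibers $U_y$ of $h$ are complete flag manifolds of Picard number two carrying two smooth $\bP^1$-fibrations, the restrictions of $\pi$ and $f$; here $C$ is a minimal line of the $\pi$-ruling with $-K_{U_y}.C=2$ (indeed $-K_{U/Y}.C=-K_{U/V}.C=2$ since $\pi^*(-K_{V/Y}).C=0$), and $f(C)$ is a minimal line of the base $W_y$ of the other ruling, where $W_y\in\{\bP^1,\bP^2,\bP^3,\bQ^3,\bQ^5,K(G_2)\}$. Restricting $K_{U_y}=K_{U_y/W_y}+(f|_{U_y})^*K_{W_y}$ to $C$ and using $T_{U/W}|_{U_y}=T_{U_y/W_y}$ yields $-K_f.C=2-i_{W_y}$. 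Hence the lemma is \emph{equivalent} to $i_{W_y}=3$, i.e.\ to $W_y\in\{\bP^2,\bQ^3,K(G_2)\}$; equivalently, writing $T_{U/X}|_C$ as the extension $0\to T_{U/W}|_C\to T_{U/X}|_C\to f^*T_{W/X}|_C\to 0$, the assertion is that this is the balanced extension $\cO(-1)\to\cO(-1^2)\to\cO(-1)$.

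The main obstacle is exactly this balancedness, namely excluding $W_y=\bP^3$ and $W_y=\bQ^5$ (which would give $-K_f.C=-2$ and $-3$). I expect purely local data to be insufficient: even the splitting $T_{U/X}|_C\cong\cO(-1^2)$, which one hopes to extract from $T_X|_\ell\cong\cO(2,1^2,0^{n-3})$ and from $T_{U/V}|_C=\cO(2)$ mapping isomorphically onto $T_\ell$, is compatible with an unbalanced sub $\cO(-2)\hookrightarrow\cO(-1^2)$ having quotient $\cO$; so the global hypothesis $i_X=4$ must enter. The approach I would take mirrors the Claim in Proposition~\ref{rhm1}: base-change $e$ over $C$ to present $U_C\to W_C\to C$ as a tower of two $\bP^1$-bundles $\bP(\sG)\to\bP(\sE)\to C$, and determine $\sE$ — whence $-K_g.f(C)=(f(C))^2_{W_C}$ — by combining the nefness of $e^*T_X|_C$ with the condition that the section of $f_C$ cut out by $C\subset U$ is contracted by $\pi$, which forces it to be the \emph{minimal} section and supplies the decisive strict inequality. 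The crux is to upgrade the single-bundle computation of Proposition~\ref{rhm1} to this two-step tower and to rule out the unbalanced possibilities; I anticipate that this exclusion is where $i_X=4$ is genuinely needed, for instance via the finite morphism $g|_{W_y}\colon W_y\to X$ onto its image (which would send the index-$4$ or index-$5$ lines of $\bP^3$ or $\bQ^5$ to the minimal curves $\ell$ of the index-$4$ manifold $X$) and a characterization of projective-space or homogeneous targets in the spirit of the positive-genus step in the proof of Theorem~\ref{fib} (Hwang--Mok, Liu). Once $i_{W_y}=3$ is established, $-K_f.C=2-i_{W_y}=-1$, and $-K_g.f(C)=-1$ follows from the identity of the first paragraph.
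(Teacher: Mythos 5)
Your first two paragraphs are sound and in fact match the paper's starting point: the paper also reduces the lemma to the identity $-K_e.C=-K_f.C+(-K_g.f(C))=-2$, after which it suffices to show that \emph{both} summands are negative integers. Your reformulation $-K_f.C=2-i_{W_y}$ via the flag-manifold fibers of $q\circ f=p\circ\pi$ is a correct repackaging. But the proposal stops exactly where the proof has to happen: the exclusion of the unbalanced cases is only conjectured (``I expect\dots'', ``I anticipate\dots''), not proved, and the mechanism you sketch is both unjustified and insufficient. Unjustified: you assert that the section of the base-changed bundle cut out by $C\subset U$, being contracted, ``is forced to be the minimal section'' --- but a contracted section need not be negative (on $\bF_0$ a section with self-intersection $0$ is contracted by the other ruling); one first needs that the contracting morphism is \emph{generically finite} on the relevant surface, and that is the actual content of the paper's argument. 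Insufficient: your proposed global input, the finite morphism $g|_{W_y}\colon W_y\to X$ onto its image, cannot exclude $W_y\simeq\bP^3$ once $\dim X>3$ (and the lemma is stated in arbitrary dimension), since the Lazarsfeld-type results of \cite{HM} and \cite{L} require the map to be surjective onto $X$; moreover your list of cases to exclude omits $W_y\simeq\bP^1$, i.e.\ $-K_f.C=0$, which is equally fatal to the statement.

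The paper closes these holes with two short arguments, neither of which needs to determine $\sE$, $T_{U/X}|_C$, or the fiber type $W_y$. For $-K_f.C<0$: the surface $U\times_WC$ is a Hirzebruch surface $\bF_m$ with a section $C'$ corresponding to $C\subset U$; since $V$ is a family of rational curves, $\pi$ restricted to this surface is generically finite, and it contracts $C'$, so $m\neq0$ and $C'$ is the negative section, whence $-K_f.C=(C')^2<0$. For $-K_g.f(C)<0$: on $W\times_XC$ the section determined by $f(C)$ is contracted by $q$ (because $q\circ f=p\circ\pi$), and $q$ is generically finite on this surface for some $C$ --- which suffices, the intersection numbers being independent of $C$ --- since otherwise $\dim Y=1$, the $q$-fibers are $n$-dimensional rational homogeneous manifolds by Remark~\ref{remquot}, $g$ restricted to a $q$-fiber is finite and surjective onto $X$, and then \cite{HM} or \cite{L} together with $i_X=4$ forces $X\simeq\bP^3$, contradicting case~(\ref{fib2}) of Theorem~\ref{fib} because the evaluation morphism of lines on $\bP^3$ is a smooth $\bP^2$-fibration. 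With both strict inequalities in hand, two negative integers summing to $-2$ must each equal $-1$. So your diagnosis that $i_X=4$ enters through Hwang--Mok/Liu is on target (it is exactly how the paper rules out $\dim Y=1$, as in Step~2 of Subcase~2.2 of Theorem~\ref{fib}), but as written the central step of the lemma --- the two strict negativity statements --- is missing, and the specific routes you propose for it would not close the $\bP^3$ and $\bP^1$ cases.
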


\begin{proof}
Since $-K_e.C=-2$,
it is enough to see that 
(i) $-K_f.C<0$
and
(ii) $-K_g.f(C)<0$.
We apply a similar argument as in the proof of Lemma~\ref{ray}.

(i) The surface $U \times _W C$
is isomorphic to $\bF_m$ and there exists a section $C'$ over $C$ corresponding to $C \subset U$.
Since $V$ is a family of rational curves, $\pi \colon U \times _W C \to \pi (U \times _W C)$ is generically finite.
Hence we have $m\neq 0$ and $C'$ is the negative section of this Hirzebruch surface.

(ii) 
Consider the Hirzebruch surface
$W \times _X C$.
First, we prove that
$q\colon  W \times _X C \to q(W \times _X C)$ is generically finite for some $C$ (cf.\ Step 2 of Subcase 2.2 in the proof of Theorem~\ref{fib}).
Otherwise, $\dim Y = 1 $ and the relative dimension of $q$ is $n$.  
Therefore, the restriction $g|_{q\text{-fiber}}$ is finite and surjective onto $X$.
Any $q$-fiber is a rational homogeneous manifold as in Remark~\ref{remquot} and $i_X=4$.
Hence, by \cite{HM} or \cite{L}, we have $X \simeq \bP ^3$.
However, the evaluation morphism of lines on $\bP ^3$ is a smooth $\bP ^2$-fibration.
This contradicts our assumption.

Hence $q\colon  W \times _X C \to Y$ is generically finite for some $C$.
Furthermore $\pi$-fiber $C$ defines a section of the projection $W \times _X C \to C$ which is contracted by $q$. Hence the assertion follows.

\end{proof}

\begin{proof}[Proof of Theorem~\ref{CP5}]

By Theorem~\ref{not4}, we may assume that $X$ has Picard number one and pseudoindex four.
Furthermore, by Proposition~\ref{rhm1}, there is no CP $5$-fold as in Theorem~\ref{fib}~(\ref{fib1}).
Hence, we may assume that the evaluation morphism of minimal rational curves is as in Theorem~\ref{fib}~(\ref{fib2}).

In this case, by Lemma~\ref{neg}, we have $-K_f.(\pi \text{-fiber}) = -1$.
Hence the morphism $q$ is a smooth morphism of relative dimension $2$, $3$ or $5$,  and fibers are $\bP ^2$, $\bQ ^3$ or $K(G_2)$ by Remark~\ref{remquot}.

First, assume that $q$ is a smooth morphism of relative dimension $2$. Then $q$ is a smooth $\bP ^2$-fibration.
Since $W$ admits two smooth $\bP ^r$-fibrations $g$ and $q$, we have:
\begin{align*}
b_4(W)&=1+b_2(Y)+b_4(Y)=b_2(X)+b_4(X),\\
b_6(W)&=b_2(Y)+b_4(Y)+b_6(Y)=b_4(X)+b_6(X).
\end{align*}
Furthermore, by duality, 
$b_4(X)=b_6(X)$ and $b_2(Y)=b_6(Y)$.
Hence these equations give $b_4(X)=1$ and $b_4(Y)=0$.
This contradicts the fact that $b_4(Y) \geq 1$.

Hence, we may assume that $q$ is a smooth morphism of relative dimension $3$ or $5$.
Then, $-K_{g}$ is nef by Proposition~\ref{nef} below.
This contradicts Lemma~\ref{neg}. 
\end{proof}

\begin{proposition}\label{nef}
Let X be a smooth Fano manifold of dimension $2n-1$ whose Picard number is one and $g\colon W\to X$ a smooth $\bP ^1$-fibration over $X$.
Assume that there exists another nontrivial contraction $g\colon W\to Y$ onto a variety $Y$ of dimension $\leq n$.
Then $-K_g$ is nef.
\end{proposition}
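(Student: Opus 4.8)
The plan is to work on $W$, which has dimension $2n$ and Picard number $\rho(W)=\rho(X)+1=2$ since $g$ is a smooth $\bP^1$-fibration over a base of Picard number one. First I would identify the two extremal rays of $\cNE(W)$: one is $R_1=[\text{$g$-fiber}]$, on which $-K_W\cdot R_1=2$. Because $q$ is a nontrivial (Mori) contraction onto $Y$ with $\dim Y\leq n<\dim X$, it cannot factor through $g$ and is not an isomorphism, so it must contract the other ray $R_2$; its fibers have dimension $\geq 2n-\dim Y\geq n\geq 2$, so $q$ is of fiber type and $R_2$ is $K_W$-negative. As both rays are then $K_W$-negative and span $\cNE(W)$, the variety $W$ is Fano and $\cNE(W)=R_1+R_2$.

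Since $-K_g\cdot R_1=2>0$, proving that $-K_g$ is nef is equivalent to proving $-K_g\cdot R_2\geq0$. Let $F$ be a general $q$-fiber. By Proposition~\ref{rho} we have $\rho(F)=\rho(W)-\rho(Y)=1$, and $N_{F/W}$ is trivial, so adjunction yields the key relation
\[
-K_F=(-K_g)|_F+(g|_F)^*(-K_X)
\]
on $F$. As $\rho(F)=1$, the class $(-K_g)|_F$ is nef precisely when $(g|_F)^*(-K_X)\leq -K_F$, so it suffices to bound $(g|_F)^*(-K_X)$ from above by $-K_F$.

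Next I would argue by contradiction. If $-K_g\cdot R_2<0$, then $(-K_g)|_F$ is anti-ample, so the relation above shows that $(g|_F)^*(-K_X)=-K_F-(-K_g)|_F$ is a sum of two ample classes, hence ample; therefore $g|_F\colon F\to X$ is finite onto its image, which has dimension $2n-\dim Y\geq n$. I would then compare minimal rational curves on $F$ with their images on $X$. Let $C$ be a minimal rational curve on $F$, so $-K_F\cdot C=i_F$; the corresponding family of rational curves, whose dimension is controlled by Proposition~\ref{fam}~(\ref{fam1}), spreads out over $Y$ to a family on $W$ whose image under the finite map $g$ dominates $X$. Running the dimension estimate of Proposition~\ref{fam} on this image family, together with $\dim X=2n-1$ and $\dim Y\leq n$, should force $(-K_X)\cdot (g|_F)_*C\leq i_F$, that is $(-K_g)|_F\cdot C\geq0$, contradicting anti-ampleness.

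The hard part will be this final dimension count: the crude estimate only gives $(-K_X)\cdot (g|_F)_*C\leq i_F+1$, so one must save a single dimension, and this is exactly where the oddness of $\dim X=2n-1$ and the bound $\dim Y\leq n$ must enter. I expect the saving to come from the fact that $g$ is a $\bP^1$-\emph{fibration}: over a fixed image curve $D=g(C)\subset X$ the surface $g^{-1}(D)$ is a Hirzebruch surface, and the sections of it lying in $q$-fibers move in a positive-dimensional family, so the induced map from the family of $C$'s to $\RC(X)$ has positive-dimensional fibers, cutting the image dimension by one. In the extreme regime $\dim Y=1$, where $g|_F\colon F\to X$ becomes finite surjective between Fano manifolds of Picard number one, I would instead appeal to the rigidity of such finite morphisms (in the spirit of \cite{HM}, \cite{L}) to conclude that $g|_F$ is étale, hence an isomorphism as $X$ is simply connected, so that $(-K_g)|_F=0$ is nef; in either regime the contradiction with anti-ampleness completes the proof.
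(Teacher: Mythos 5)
You have correctly identified a workable-looking strategy only up to its decisive step, and you concede as much yourself: everything hinges on improving the bend-and-break-type estimate $(-K_X)\cdot (g|_F)_*C\leq i_F+1$ to $(-K_X)\cdot (g|_F)_*C\leq i_F$, and for that crucial saving of one dimension you offer only an expectation (``the sections \dots{} move in a positive-dimensional family, so \dots{} cutting the image dimension by one''), not an argument. Until that inequality is actually proved, the contradiction with anti-ampleness never materializes, so this is a genuine gap rather than a missing detail. Several supporting steps are also unjustified. First, ``$q$ is of fiber type and $R_2$ is $K_W$-negative'' does not follow: a fiber-type contraction need not be a Mori contraction, and you must not presuppose that $W$ is Fano --- in the paper's application the proposition is invoked precisely to contradict $-K_g\cdot(\pi\text{-fiber})=-1$ (Lemma~\ref{neg}), so the relevant $W$ is exactly the situation in which this positivity is in doubt. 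Second, $\rho(F)=1$ via Proposition~\ref{rho} is a misapplication: that proposition requires a \emph{smooth Mori} contraction, and at this point $q$ is known to be neither smooth nor $K_W$-negative; without $\rho(F)=1$, or at least without knowing $F$ is Fano (equivalently, without $-K_W$ being $q$-ample), the notions ``anti-ample'', ``$i_F$'' and ``minimal rational curve on $F$'' have no footing. (Note, though, that finiteness of $g|_F$ is automatic and needs no ampleness: a curve in a $q$-fiber contracted by $g$ would have class in $R_1\cap R_2=\{0\}$.) Third, in the $\dim Y=1$ regime your fiber $F$ is not known to be homogeneous, so \cite{HM} and \cite{L} do not apply the way they do in Section~\ref{fammrc}, and finite surjective morphisms between Fano manifolds of Picard number one are not \'etale in general; indeed under your contradiction hypothesis the ramification divisor of $g|_F$ is $-(-K_g)|_F$, which would be ample, so no contradiction arises there either.

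For comparison, the paper's proof is purely numerical and sidesteps every one of these issues. It realizes the smooth $\bP^1$-fibration $W$ as a conic divisor $W\in|2\eta|$ in $\bP(\sG)$ for a self-dual rank-3 bundle $\sG$ on $X$, so that $-K_g=\eta_W$ and $\eta^3\nequiv -g^*(c_2(\sG)).\eta$; it then introduces the slope $\tau$ of $-K_g$, observes that $-K_g+\tau H$ is pulled back from $Y$, and uses the hypothesis $\dim Y\leq n$ only through the vanishing
\[
(\eta_W+\tau H)^{2n}=\cdots=(\eta_W+\tau H)^{n+1}=0,
\]
which, after reduction by the Grothendieck relation and $H^{2n}=0$, yields a homogeneous linear system in the nonzero vector $(M_1,M_3,\dots,M_{2n-1})$; the nonvanishing of the binomial determinant $\det\bigl(\binom{2n-j+1}{2i-1}\bigr)$ then forces $\tau=0$. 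That argument needs neither the extremal-ray structure of $\cNE(W)$, nor any property of the fibers of $q$, nor rational-curve theory on those fibers --- which is exactly why it closes, where your proposal, as it stands, does not.
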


\begin{proof}
Since $g$ is a smooth $\bP ^1$-fibration over $X$, there exists a vector bundle $\sG $ of rank $3$ over $X$ which have the following properties:

\begin{enumerate}
\item $W \subset \bP (\sG )$ and $W \in |2\eta|$, where $\eta$ is the tautological divisor on $\bP (\sG )$.

\item $\sG  \simeq \sG  ^*$.

\end{enumerate}
Then, by adjunction, $-K_g = \eta_W$.
Moreover $\eta^3 \nequiv -g^*(c_2(\sG )).\eta$ by the definition of Chern classes.

Let $H$ be the pullback of the ample generator of $\Pic (X)$ by $g$, and
let $\tau$ be the slope of $g$, that is, the real number $\tau$ which satisfies $-K_g+\tau H$ is nef but not ample (cf.\ \cite{MOS}).
By definition, $\eta_W+\tau H$ $(=-K_g+\tau H)$ is a pullback of a $\bR$-divisor on $Y$.
Hence
\[
(\eta_W+\tau H)^{2n}=\cdots=(\eta_W+\tau H)^{n+1}=0.
\]
Therefore, on $\bP (\sG )$,
\[
(\eta+\tau H)^{2n}.\eta =\cdots=(\eta+\tau H)^{n+1}.\eta.H^{n-1}=0.
\]
Using the relations $\eta^3 =  -g^*(c_2(\sG )).\eta$ and $H^{2n}=0$, we have
\[
\sum^{n}_{i=1}
\binom{2n-j+1}{2i-1}M_{2i-1} \tau^{2n-2i-j+2}
=0
\]
for all $j=1,2,\dots n$, where $M_{2i-1}=  (-g^*(c_2(\sG )))^{i-1}.H^{2n-i}$.

Since $M_1\neq 0$,
\[
\det
\begin{pmatrix}
\binom{2n-j+1}{2i-1}\tau^{2n-2i-j+2}
\end{pmatrix}
_{i,j}
=0.
\]
On the other hand,
\[
\det
\begin{pmatrix}
\binom{2n-j+1}{2i-1}
\end{pmatrix}
_{i,j}
\neq
0.
\]
Hence $\tau =0$, completing the proof. 
\end{proof}


\bibliographystyle{amsplain}
\bibliography{}

\end{document}